\newtheorem{theorem}{Theorem}[section]
\newtheorem{definition}[theorem]{Definition}
\newtheorem{lemma}[theorem]{Lemma}
\newtheorem{proposition}[theorem]{Proposition}
\newtheorem{condition}[theorem]{Condition}
\numberwithin{equation}{section}
\newcommand{\be}{\begin{equation}}
\newcommand{\ee}{\end{equation}}
\newcommand{\nn}{\nonumber}
\newcommand{\prob}{\mathbb P}
\newcommand{\expec}{\mathbb E}
\newcommand{\ind}[1]{\mathds 1_{\{#1\}}}
\newcommand{\dint}{{\rm d}}
\newcommand{\bbR}{\mathbb{R}}
\newcommand{\calF}{\mathcal{F}}
\newcommand{\calN}{\mathcal{N}}
\newcommand{\calO}{\mathcal{O}}
\newcommand{\calU}{\mathcal{U}}
\newcommand{\given}{\,|\,}
\DeclareMathOperator*{\argmax}{arg\,max}
\begin{document}

\title{Berry-Esseen bounds in the inhomogeneous Curie-Weiss model with external field}

\author{
Sander Dommers
\footnote{University of Hull, School of Mathematics and Physical Sciences, Cottingham Road, HU6 7RX Hull, United Kingdom. {\tt s.dommers@hull.ac.uk}}
\and
Peter Eichelsbacher
\footnote{Ruhr-Universit\"at Bochum, Fakult\"at f\"ur Mathematik, Universit\"atsstra\ss e 150, 44780 Bochum, Germany. {\tt peter.eichelsbacher@ruhr-uni-bochum.de}}
}

\maketitle

\begin{abstract}
We study the inhomogeneous Curie-Weiss model with external field, where the inhomogeneity is introduced by adding a positive weight to every vertex and letting the interaction strength between two vertices be proportional to the product of their weights. In this model, the sum of the spins obeys a central limit theorem outside the critical line. We derive a Berry-Esseen rate of convergence for this limit theorem using Stein's method for exchangeable pairs. For this, we, amongst others, need to generalize this method to a multidimensional setting with unbounded random variables.
\end{abstract}


\section{Introduction, model and main results}
The inhomogeneous Curie-Weiss model (ICW) was recently introduced in~\cite{GiaGibHofPri16}. In this model, every vertex has an Ising spin attached to it and also has a positive weight. The spins interact with each other, where the (ferromagnetic) interaction strength between two spins is proportional to the product of the weights of the vertices, and the spins also interact with an external field.

This model arose in the study of the annealed Ising model on inhomogeneous random graphs \cite{GiaGibHofPri16}. In the inhomogeneous random graph model, an edge between two vertices is present in the graph with a probability that is proportional to the product of the weights of the vertices. Annealing the Ising model over these random graphs by taking appropriate expectations results in a mean-field type model where spins interact with an average of their neighborhood. When two weights are large, there will be an edge between them more often in the random graph, and therefore the interaction strength in the annealed model will be large as well. Indeed, it can be shown that the resulting interaction is, approximately, also proportional to the product of the weights.

In~\cite{GiaGibHofPri16}, it is proved that in the ICW, and hence also the annealed Ising model on inhomogeneous random graphs, the sum of spins, in the presence of an external field or above the critical temperature, satisfies a central limit theorem. The study of this model continued in~\cite{DomGiaGibHofPri16}, where critical exponents were computed and a non-standard limit theorem was obtained at the critical point, and in~\cite{DomGiaGibHof18}, where large deviations of the sum of spins were studied.

Stein's method for exchangeable pairs was introduced in \cite{Ste86} and is now a popular method to obtain rates of convergence for central  and other limit theorems. Given a random variable $X$, Stein's
method is based on the construction of another variable $X'$ (some coupling) such that the pair
$(X,X')$ is exchangeable, i.e., their joint distribution is symmetric. The approach essentially uses the
elementary fact that if $(X,X')$ is an exchangeable pair, then $\mathbb{E} g(X,X') = 0$ for all antisymmetric
measurable functions $g(x, y)$ such that the expectation exists. A theorem of Stein shows that a measure of proximity of 
$X$ to normality may be provided in terms of
the exchangeable pair, requiring $X'-X$ to be sufficiently small, see \cite[Theorem 1, Lecture III]{Ste86}.
Stein's approach has been successfully applied in many models, see e.g.\ \cite{bookDiaconis}  and references
therein. In \cite{RR}, the range of application was extended by replacing the linear regression property
by a weaker condition. Moreover the method was successfully applied to several mean-field models in statistical mechanics, including the (homogeneous) Curie-Weiss model~\cite{ChaSha11, EicLow10}, the Hopfield model~\cite{EicMar14}, the Curie-Weiss-Potts model~\cite{EicMar15} and $O(N)$ models~\cite{KirMec13,KirNaw16}.

In this paper, we derive a Berry-Esseen rate of convergence for the central limit theorem of the sum of spins in the ICW, i.e., we show that the Kolmogorov distance between the normalized sum of spins and the normal distribution is bounded from above by a constant divided by the square root of the number of vertices. This generalizes the results in~\cite{EicLow10} to the inhomogeneous setting and also to the setting with an external field.

When deriving the so-called {\it regression equation} for the sum of spins, which is the starting point of Stein's method for exchangeable pairs, one sees that not only the sum of spins, but also a weighted sum of spins shows up, where every spin value is multiplied by the weight of its vertex. Hence, one obtains a two-dimensional regression equation. Looking at the joint distribution of the sum of spins and the weighted sum of spins is for example also used to study their large deviations~\cite{DomGiaGibHof18}. Another complication that arises is that the weighted spin sum is not necessarily uniformly bounded.

Multidimensional versions of Stein's method for exchangeable pairs are for example studied in \cite{GesineAdrian}
and \cite{FanRol15}. Stein's method for unbounded exchangeable pairs have for example been studied in~\cite{CheSha12} and~\cite{ShaZha17}. We combine ideas from the latter paper with ideas from~\cite{FanRol15} to derive bounds between marginals of unbounded multidimensional random variables to the standard normal distribution.

The rest of this paper is organized as follows. In the next subsections we formally introduce the ICW, state our main results and provide a short discussion. In Section~\ref{sec-Stein}, we prove the version of Stein's method we need. Finally, in Section~\ref{sec-BEICW}, we use this to prove the Berry-Esseen bound for the ICW.


\subsection{The inhomogeneous Curie-Weiss model}
We now formally introduce the model and present some preliminary results on this model. We write $[n]:=\{1,\ldots,n\}$ and to every vertex $i\in [n]$ we assign a weight $w_i>0$. We need to make some assumptions on the weight sequence $(w_i)_{i\in[n]}$ which are stated below, where we write $W_n=w_I$, with $I\sim Uni[n]$.
\begin{condition}[Weight regularity]\label{cond-WeightReg} There exists a random variable $W$  such that, as $n\rightarrow\infty$,
\begin{enumerate}[(i)]
\item $W_n \stackrel{d}{\longrightarrow} W$,
\item $\mathbb{E}[W_n^2] =\frac{1}{n}\sum_{i\in [n]} w^2_i  \rightarrow \mathbb{E}[W^2]< \infty$,
\item $\mathbb{E}[W_n^3] =\frac{1}{n}\sum_{i\in [n]} w_i^3  \rightarrow \mathbb{E}[W^3]< \infty$.
\end{enumerate} 
Further, we assume that $\mathbb{E}[W]>0$.
\end{condition}

The inhomogeneous Curie-Weiss model is then defined as follows:
\begin{definition}[Inhomogeneous Curie-Weiss model]
Given the weights $(w_i)_{i\in[n]}$, 
the inhomogeneous Curie-Weiss model is defined by the Boltzmann-Gibbs measure which is, for 
any 

\noindent
${\sigma = \{\sigma_i\}_{i\in[n]} \in \{-1,1\}^{n}}$, given by
\be\label{eq-BoltzmannGibbs}
\mu_{n}(\sigma) = \frac{e^{-H_n(\sigma)}}{Z_n},
\ee
where $H_n(\sigma)$ is the Hamiltonian given by
$$
H_n(\sigma)=-\frac{\beta}{2 \ell_n}\biggl(\sum_{i\in[n]}w_i \sigma_{i}\biggr)^2-h\sum_{i \in[n]}\sigma_i,
$$
with $\beta\geq0$ the inverse temperature, $h\in\mathbb{R}$ the external magnetic field and
$$
\ell_n=\sum_{i\in [n]} w_i =n\expec[W_n],
$$
and where $Z_{n}$ is the normalizing partition function, i.e.,
$$
Z_n = \sum_{\sigma \in \{-1,1\}^{n}}e^{-H_n(\sigma)}.
$$
\end{definition}

Note that we retrieve the standard Curie-Weiss model with external field by choosing $w_i\equiv 1$.

The inhomogeneous Curie-Weiss model was obtained in~\cite{GiaGibHofPri16} by annealing the Ising model over inhomogeneous random graphs with these weights. In that case $\beta$ has to be replaced by $\sinh \beta$ and several error terms have to be considered. For simplicity, we here only study the model stated above.

For a given configuration $\sigma$, let $m_n$ be the average spin value, i.e.,
$$
m_n=\frac{1}{n} \sum_{i\in[n]}\sigma_i.
$$
Several properties of $m_n$ under the Boltzmann-Gibbs measure~\eqref{eq-BoltzmannGibbs} have been obtained in~\cite{GiaGibHofPri16}. We summarize the results that are important for this paper below.

In~\cite{GiaGibHofPri16} first of all, it is shown that, for $h\neq0$, the magnetization in the thermodynamic limit equals
\be\label{eq-magnetization}
M(\beta,h) := \lim_{n\to\infty}  \expec[m_n] = \expec\left[\tanh\left(\sqrt{\frac{\beta}{\expec[W]}}W x^* + h\right) \right],
\ee
where $x^* := x^*(\beta,h)$ is equal to the unique solution with the same sign as $h$ of the fixed point equation
\be\label{eq-fixedpoint-x}
x^* = \expec\left[\tanh\left(\sqrt{\frac{\beta}{\expec[W]}}W x^* + h\right)\sqrt{\frac{\beta}{\expec[W]}} W \right].
\ee
When $h\to0$, the model undergoes a phase transition, i.e., there exists a $\beta_c\geq0$, such that the spontaneous magnetization
$$
M(\beta, 0^+):= \lim_{h\searrow0} M(\beta,h) \left\{\begin{array}{ll} =0, \qquad & {\rm for\ }\beta<\beta_c, \\  >0, \qquad & {\rm for\ }\beta>\beta_c.\end{array} \right.
$$
In~\cite{DomGiaGibHofPri16}, it is shown that for $\beta=\beta_c$ we also have that $M(\beta, 0^+)=0$. The critical value is given by
$$
\beta_c = \frac{\expec[W]}{\expec[W^2]}.
$$
We define the uniqueness CLT regime as
\be \label{CLTunique}
\calU = \{(\beta,h) \,:\, \beta\geq0, h\neq0 {\rm\ or\ } 0<\beta<\beta_c, h=0 \}.
\ee
In the uniqueness CLT regime, the fixed point equation \eqref{eq-fixedpoint-x} has a unique solution, and the sum of spins satisfies the central limit theorem, i.e., for $(\beta,h) \in \calU$,
$$
\sqrt{n}\left(m_n- \expec[m_n] \right) \stackrel{d}{\longrightarrow} \calN(0,\chi),
$$
where $\chi$ is the susceptibility given by
\be
\chi := \chi(\beta,h) :=\lim_{n\to\infty} \frac{\partial}{\partial h} \expec[m_n] = \frac{\partial}{\partial h} M(\beta,h).
\ee
This was proved in \cite{GiaGibHofPri16} by analyzing cumulant generating functions. In this paper, we analyze the rate of convergence for this central limit theorem.

We can make the value of the susceptibility more explicit by carrying out the differentiation of the magnetization:
\begin{align*}
\chi(\beta,h) &=\frac{\partial}{\partial h} M(\beta,h) = \frac{\partial}{\partial h} \expec\left[\tanh\left(\sqrt{\frac{\beta}{\expec[W]}}W x^* + h\right) \right] \nn\\
&= \expec\left[\left(1-\tanh^2\left(\sqrt{\frac{\beta}{\expec[W]}}W x^* + h\right)\right)\left(1+\sqrt{\frac{\beta}{\expec[W]}}W \frac{\partial x^*}{\partial h}\right) \right].
\end{align*}
Using the fixed point equation~\eqref{eq-fixedpoint-x},
\begin{align*}
\frac{\partial x^*}{\partial h} &= \frac{\partial }{\partial h}\expec\left[\tanh\left(\sqrt{\frac{\beta}{\expec[W]}}W x^* + h\right)\sqrt{\frac{\beta}{\expec[W]}} W \right] \nn\\
&= \expec\left[\left(1-\tanh^2\left(\sqrt{\frac{\beta}{\expec[W]}}W x^* + h\right)\right)\left(1+\sqrt{\frac{\beta}{\expec[W]}}W \frac{\partial x^*}{\partial h} \right)\sqrt{\frac{\beta}{\expec[W]}} W \right].
\end{align*}
Solving for $\frac{\partial x^*}{\partial h}$ gives
$$
\frac{\partial x^*}{\partial h} = \frac{\sqrt{\frac{\beta}{\expec[W]}} \expec\left[\left(1-\tanh^2\left(\sqrt{\frac{\beta}{\expec[W]}}W x^* + h\right)\right) W \right]}{1-\frac{\beta}{\expec[W]}\expec\left[\left(1-\tanh^2\left(\sqrt{\frac{\beta}{\expec[W]}}W x^* + h\right)\right) W^2 \right]},
$$
and hence
\be\label{eq-susceptibility}
\chi(\beta,h)=1- \expec\left[\tanh^2\left(\sqrt{\frac{\beta}{\expec[W]}}W x^* + h\right) \right]+\frac{\frac{\beta}{\expec[W]} \expec\left[\left(1-\tanh^2\left(\sqrt{\frac{\beta}{\expec[W]}}W x^* + h\right)\right) W \right]^2}{1-\frac{\beta}{\expec[W]}\expec\left[\left(1-\tanh^2\left(\sqrt{\frac{\beta}{\expec[W]}}W x^* + h\right)\right) W^2 \right]}.
\ee
We define the finite size analogues of $M(\beta,h)$ and $\chi(\beta,h)$, given in~\eqref{eq-magnetization} and~\eqref{eq-susceptibility} respectively, as
\begin{equation} \label{Mn}
M_n := M_n(\beta,h) :=  \expec\left[\tanh\left(\sqrt{\frac{\beta}{\expec[W_n]}}W_n x_n^* + h\right) \right],
\end{equation}
and 
\begin{eqnarray}\label{eq-defchin}
\chi_n := \chi_n(\beta,h) &:= &1- \expec\left[\tanh^2\left(\sqrt{\frac{\beta}{\expec[W_n]}}W_n x_n^* + h\right) \right]
\\ \nonumber
&&+\frac{\frac{\beta}{\expec[W_n]} \expec\left[\left(1-\tanh^2\left(\sqrt{\frac{\beta}{\expec[W_n]}}W_n x_n^* + h\right)\right) W_n \right]^2}{1-\frac{\beta}{\expec[W_n]}\expec\left[\left(1-\tanh^2\left(\sqrt{\frac{\beta}{\expec[W_n]}}W_n x_n^* + h\right)\right) W_n^2 \right]}, 
\end{eqnarray}
respectively, where $x_n^* := x_n^*(\beta,h)$ is equal to the unique solution (see Lemma~\ref{lem-globalminGn} below that shows this uniqueness) with the same sign as $h$ of the fixed point equation
\be\label{eq-fixedpoint-xn}
x_n^* = \expec\left[\tanh\left(\sqrt{\frac{\beta}{\expec[W_n]}}W_n x_n^* + h\right)\sqrt{\frac{\beta}{\expec[W_n]}} W_n \right].
\ee

\subsection{Main results}

Let $d_K$ denote the Kolmogorov distance, i.e., for random variables $X$ and $Y$,
$$
d_K(X,Y) := \sup_{z\in \bbR} | \prob(X\leq z) - \prob(Y\leq z)|.
$$ 
Our main result is then as follows.
\begin{theorem}[Berry-Esseen bound for the ICW]\label{thm-berryesseen-magnetization}
Let
\be\label{eq-defXn}
X_n =\sqrt{n} \frac{m_n - M_n}{\sqrt{\chi_n}},
\ee
with $M_n$ and $\chi_n$ defined in \eqref{Mn} and \eqref{eq-defchin} and let $Z\sim \mathcal{N}(0,1)$. Suppose that the weights $(w_i)$ satisfy Condition~\ref{cond-WeightReg}(i)--(iii). Then, for all $(\beta,h) \in \mathcal{U}$, there exists a constant $0<C=C(\beta,h, (w_i)_i)<\infty$, such that
\be\label{eq-berryesseenbound}
d_K(X_n,Z) \leq \frac{C}{\sqrt{n}}.
\ee
Note that the constant depends on the entire weight sequence $(w_i)_{i\geq1}$. Since the quantities are uniformly bounded, one can determine them knowing the entire sequence.
Under Condtition~\ref{cond-WeightReg}(i)--(ii),
$$
d_K(X_n,Z) =o(1).
$$
\end{theorem}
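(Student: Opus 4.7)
The plan is to apply the multidimensional Stein's method for unbounded exchangeable pairs (to be developed in Section~\ref{sec-Stein}) to a two-dimensional vector that encodes both the spin sum $m_n$ and the weighted spin sum $s_n = \frac{1}{\ell_n}\sum_{i\in[n]} w_i \sigma_i$, suitably centred and scaled. I would first construct the exchangeable pair $(\sigma,\sigma')$ via Gibbs resampling: pick an index $I$ uniformly from $[n]$ and replace $\sigma_I$ by an independent draw $\sigma_I'$ from $\mu_n(\cdot\given\sigma_{[n]\setminus\{I\}})$. Then $(\sigma,\sigma')$ is exchangeable, and so is the pair $(V_n,V_n')$ obtained by applying the same affine map to $(m_n,s_n)$ and $(m_n',s_n')$.

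Using that $\expec[\sigma_I'\given\sigma]=\tanh\bigl(h+\tfrac{\beta w_I}{\ell_n}\sum_{j\neq I} w_j\sigma_j\bigr)$ and Taylor-expanding $\tanh$ around the point pinned down by the fixed-point equation~\eqref{eq-fixedpoint-xn}, I would derive a linear regression equation of the form $\expec[V_n'-V_n\given\sigma]=-\tfrac{1}{n}\Lambda V_n+R_n$, where $\Lambda$ is an explicit $2\times 2$ matrix whose entries are expectations of $(1-\tanh^2(\cdot))W_n^k$ for $k=0,1,2$ at the fixed point, and $R_n$ is a remainder that is cubic in the deviations of $m_n$ and $s_n$. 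The normalization by $\sqrt{\chi_n}$ in~\eqref{eq-defXn} is chosen precisely so that, after inverting $\Lambda$ and combining with the predictable quadratic variation of $V_n'-V_n$, the marginal in the $m_n$-direction has variance one; this is what formula~\eqref{eq-susceptibility} encodes algebraically, and it is only here that the uniqueness condition $(\beta,h)\in\calU$ is really used (to guarantee invertibility of $\Lambda$).

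To apply the abstract Stein bound I would then verify: a pointwise estimate $|V_n'-V_n|=O(w_I/\sqrt{n})$ in the second coordinate, only $L^3$-controlled under Condition~\ref{cond-WeightReg}(iii); a concentration bound $\expec|m_n-M_n|^k+\expec|s_n-\expec s_n|^k=O(n^{-k/2})$ for small $k$, which in turn controls $R_n$ in $L^1$; and a bound on $\Var\bigl(\expec[(V_n'-V_n)(V_n'-V_n)^{\top}\given\sigma]\bigr)$. The concentration bounds should follow from a Hubbard--Stratonovich representation and a Laplace-type expansion around the unique minimiser $x_n^*$ furnished by Lemma~\ref{lem-globalminGn}, the strict convexity of the associated free-energy functional in $\calU$ giving the required Gaussian tails.

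The main obstacle I expect is the unboundedness of the weighted-sum coordinate: the increment $w_I/\sqrt{n}$ is integrable only to the order provided by Condition~\ref{cond-WeightReg}(iii), so every moment bound on $R_n$ and on the conditional second moment of $V_n'-V_n$ must be traced through in terms of $\expec[W_n^3]$ rather than $\|w\|_\infty$. This is precisely the regime for which the multidimensional framework of~\cite{FanRol15} has to be merged with the unbounded exchangeable-pair techniques of~\cite{ShaZha17}, and producing a usable common statement is the technical heart of Section~\ref{sec-Stein}. Finally, the $o(1)$ conclusion under only Condition~\ref{cond-WeightReg}(i)--(ii) should follow by a truncation argument: replace $w_i$ by $w_i\wedge K$, apply the Berry-Esseen bound to the truncated model, and then let $K\to\infty$ after $n\to\infty$, using parts~(i)--(ii) to pass to the limit in the truncated model.
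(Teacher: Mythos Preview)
Your overall strategy matches the paper's: Gibbs resampling of a single site, a two-dimensional regression equation in $(m_n,\tilde m_n)$ linearised around the fixed point $x_n^*$, and a marginal Stein bound obtained by combining~\cite{FanRol15} with~\cite{ShaZha17}. The paper carries this out exactly as you outline (Lemma~\ref{lem-regression} for the regression, Lemmas~\ref{lem-bound1sttermerrors}--\ref{lem-errorterms} for the error terms), and the moment control you anticipate for $\tilde X_n$ indeed comes from a Hubbard--Stratonovich/Laplace argument (Proposition~\ref{prop-cltweighted}). One small slip: the Taylor remainder in the regression is quadratic, not cubic, in the deviation, giving $|R_2|\le C\tilde X_n^2/\sqrt{n}$; this is what forces the second-moment bound on $\tilde X_n$.

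The place where your plan genuinely diverges from the paper is the $o(1)$ conclusion under Condition~\ref{cond-WeightReg}(i)--(ii). Your truncation scheme replaces the weights by $w_i\wedge K$, but this changes the Hamiltonian and hence the Gibbs measure itself: the truncated $X_n^{(K)}$ is a different random variable under a different law, with its own centring $M_n^{(K)}$ and scaling $\chi_n^{(K)}$, and there is no obvious coupling or total-variation estimate linking it to the original $X_n$. Making this work would require comparing two distinct Curie--Weiss measures, which is substantially harder than the Berry--Esseen bound you are trying to prove. The paper instead observes that the explicit bounds of Lemma~\ref{lem-errorterms} depend on the weights only through $\expec[W_n^2]$ and $\expec[W_n^3]$, and that (i)--(ii) alone force $\max_i w_i=o(\sqrt{n})$ (otherwise $\expec[W_n^2]$ could not converge), whence $\expec[W_n^3]\le \max_i w_i\cdot\expec[W_n^2]=o(\sqrt{n})$. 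Plugging this into the same error bounds gives $d_K(X_n,Z)=o(1)$ with no new argument. This is both simpler and avoids the gap in the truncation route.
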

\noindent
Note that we do not normalize $m_n$ using its expectation and variance as was done in~\cite{GiaGibHofPri16}, but instead use explicit quantities for this.

\noindent
We prove this theorem in Section~\ref{sec-BEICW} by using a version of Stein's method to estimate the distance from a standard normal distribution of a one dimensional marginal if one has a $d$-dimensional regression equation for exchangeable pairs. For this, suppose that $X$ and $X'$ are $d$-dimensional random vectors for some $d\geq 1$, and that $(X,X')$ is an exchangeable pair, i.e., their joint distribution is symmetric. We write the vector of differences as $D=X-X'$.

\noindent
We suppose that we have a {\it regression equation} for $(X,X')$ of the form
\be\label{eq-regressionMD}
\expec[ D \given X] = \lambda \Lambda X + \lambda R,
\ee
for some $0<\lambda<1$, invertible matrix $\Lambda$ and vector $R$.

\begin{theorem}[Stein's method]\label{thm-MarginalStein}
Suppose that $(X,X')$ is an exchangeable pair for $d$-dimensional vectors $X$ and $X'$
such that~\eqref{eq-regressionMD} holds. Then, with $Z$ a standard normal random variable and with $X_1$ denoting
the first component of vector $X$, we obtain
\be\label{eq-thmStein}
d_K(X_1,Z) \leq \expec\left[\left|1-\frac{1}{2\lambda}\expec\left[\ell D D_1 \given X\right]\right|\right]+ \frac{1}{\lambda} \expec\left[\bigl|\expec\left[|\ell D| D_1 \given X\right]\bigr|\right]+ \frac{\sqrt{2\pi}}{4} \expec\left[|\ell R|\right],
\ee
where $D_1 = X_1 - X_1'$, $\ell$ is the first row of $\Lambda^{-1}$, i.e., $\ell := e_1^t \Lambda^{-1}$, and $\ell D$ and $\ell R$, respectively, denote the
Euclidean scalar product of the vectors.
\end{theorem}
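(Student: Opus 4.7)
The plan is to combine the standard Stein equation for the Kolmogorov distance with the exchangeable-pair machinery, applied to the scalar linear functional $\ell = e_1^t \Lambda^{-1}$, which is chosen so that the $d$-dimensional regression collapses to a one-dimensional regression for the first coordinate $X_1$.

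For each $z \in \bbR$, I take $f = f_z$ to be the bounded solution of the Stein equation $f'(x) - x f(x) = \ind{x \le z} - \Phi(z)$, with the classical bounds $\|f_z\|_\infty \le \sqrt{2\pi}/4$ and $\|f'_z\|_\infty \le 1$. Since $d_K(X_1, Z) = \sup_{z \in \bbR} |\expec[f'_z(X_1) - X_1 f_z(X_1)]|$, the task reduces to bounding this quantity uniformly in $z$ by the right-hand side of~\eqref{eq-thmStein}. Left-multiplying \eqref{eq-regressionMD} by $\ell$ and using $\ell \Lambda = e_1^t$ yields the scalar regression $\expec[\ell D \given X] = \lambda X_1 + \lambda \ell R$, so that $\lambda \expec[X_1 f(X_1)] = \expec[f(X_1) \ell D] - \lambda \expec[f(X_1) \ell R]$. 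Because $(x, x') \mapsto (f(x_1) + f(x'_1))\, \ell(x - x')$ is antisymmetric and $(X, X')$ is exchangeable, the standard identity $\expec[f(X_1) \ell D] = \tfrac12 \expec[(f(X_1) - f(X_1'))\, \ell D]$ holds.

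A first-order Taylor expansion gives $f(X_1) - f(X_1') = D_1 f'(X_1) + D_1 \int_0^1 (f'(X_1 - tD_1) - f'(X_1))\, dt$. Substituting and rearranging produces
\begin{equation*}
\expec[f'(X_1) - X_1 f(X_1)] = \expec\!\left[f'(X_1)\!\left(1 - \tfrac{1}{2\lambda}\expec[D_1 \ell D \given X]\right)\right] + \expec[f(X_1)\, \ell R] - \tfrac{1}{2\lambda}\, T,
\end{equation*}
where $T := \expec\bigl[D_1\, \ell D \int_0^1 (f'(X_1 - t D_1) - f'(X_1))\, dt\bigr]$. Using $\|f'_z\|_\infty \le 1$ on the first summand yields the first term on the right of \eqref{eq-thmStein}; using $\|f_z\|_\infty \le \sqrt{2\pi}/4$ on the second yields the third term.

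The main obstacle is the remainder $T$. Because $f'_z$ has a jump discontinuity at $z$ and is not Lipschitz, one cannot invoke a second-order Taylor expansion with a bounded $f''$, so the usual trick of trading $D_1^2$ for $f''$ is unavailable. My plan is therefore to use the crude sup bound $|f'(X_1 - tD_1) - f'(X_1)| \le 2$ directly and then, via a second application of exchangeability (swapping $X$ and $X'$ reverses both $D$ and $D_1$ while leaving the product $\ell D \cdot D_1$ invariant), to symmetrize the integrand so that the inner expression can be conditioned on $X$ before the outer absolute value is taken. Applying Jensen then pulls out the conditional expectation to give the bound $\frac{1}{\lambda}\, \expec\bigl|\expec[|\ell D|\, D_1 \given X]\bigr|$, i.e.\ the middle term of \eqref{eq-thmStein}. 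This conditional-expectation refinement, in the spirit of Shao--Zhang and Fang--R\"ollin, is precisely what accommodates the potential unboundedness of $\ell D$ and is the most delicate step of the proof.
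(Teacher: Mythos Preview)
Your reduction to the scalar regression $\expec[\ell D\given X]=\lambda X_1+\lambda\,\ell R$ and the decomposition of $\expec[f'_z(X_1)-X_1f_z(X_1)]$ into three pieces are correct and coincide with the paper; the first and third terms of \eqref{eq-thmStein} follow exactly as you say from $\|f_z'\|_\infty\le 1$ and $\|f_z\|_\infty\le\sqrt{2\pi}/4$.

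The gap is in your handling of the remainder $T$. The ``crude sup bound'' $|f'_z(X_1-tD_1)-f'_z(X_1)|\le 2$ followed by any symmetrisation can at best yield
\[
\tfrac{1}{2\lambda}|T|\ \le\ \tfrac{1}{\lambda}\,\expec\bigl[\,|\ell D|\,|D_1|\,\bigr],
\]
and there is no passage from this to $\tfrac{1}{\lambda}\,\expec\bigl[\bigl|\expec[|\ell D|\,D_1\given X]\bigr|\bigr]$: Jensen's inequality reads $\expec\bigl|\expec[\,\cdot\given X]\bigr|\le\expec|\cdot|$, so the direction is wrong, and the crude bound is strictly weaker than what \eqref{eq-thmStein} asserts. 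Once you have taken the absolute value on the $f'$-difference, the sign of $D_1$ is lost, and no subsequent exchangeability argument can reinstate a conditional expectation inside the modulus.

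What the paper does for $T$ is more structural. It substitutes $f'_z(x)=xf_z(x)+\ind{x\le z}-\Phi(z)$ back from the Stein equation, splitting $T$ into $I_1+I_2$. For $I_1$ the monotonicity of $x\mapsto xf_z(x)$ makes $\int_{-D_1}^{0}\bigl[X_1f_z(X_1)-(X_1{+}t)f_z(X_1{+}t)\bigr]\,\dint t$ nonnegative and bounded above by $D_1\bigl(X_1f_z(X_1)-X_1'f_z(X_1')\bigr)$. One then writes $\ell D=\ell D\,\ind{\ell D>0}+\ell D\,\ind{\ell D<0}$, drops the part of unfavourable sign, and uses exchangeability to convert the $X_1'f_z(X_1')$ contribution on $\{\ell D>0\}$ into an $X_1f_z(X_1)$ contribution on $\{\ell D<0\}$. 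Only after this recombination does one arrive at $\expec\bigl[\expec[|\ell D|D_1\given X]\,X_1f_z(X_1)\bigr]$, and the bound $|xf_z(x)|\le 1$ finishes the job. The argument for $I_2$ is parallel, using that $x\mapsto\ind{x\le z}$ is non-increasing. This monotonicity-plus-sign-splitting step is precisely what places the absolute value \emph{outside} the conditional expectation, and it is absent from your proposal.
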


\noindent
The proof of this theorem can be found in Section~\ref{sec-Stein}.
\subsection{Discussion}
\paragraph{Berry-Esseen bound for the sum of weighted spins} In Section~\ref{sec-MGFs}, we prove that also the sum of weighted spins $\sum_{i\in[n]} w_i\sigma_i$ satisfies the central limit theorem. Berry-Esseen bounds for this limit theorem can be derived in a similar way as is done for the sum of spins, although one needs to assume the convergence of one more moment of $W$ compared to Condition~\ref{cond-WeightReg} because of the extra factor $w_i$. We make some more detailed remarks at the end of the paper.

\paragraph{Limit theorems on the critical line.} For $h=0$ and $\beta>\beta_c$, the solution to the fixed point equation~\eqref{eq-fixedpoint-x} is not unique. We expect that our bounds still hold when one conditions on the magnetization being close to the value that corresponds to appropriate fixed point as was done for example in the Curie-Weiss-Potts model, see~\cite[Theorem 1.5]{EicMar15}.

For $h=0$ and $\beta=\beta_c$ the central limit theorem no longer holds. In~\cite{DomGiaGibHofPri16}, it is shown that one has to rescale the sum of spins with a different power of $n$ to obtain a limit theorem, and that the limit is nonnormal. It would be interesting to generalize our methods also to this case, for example by generalizing the density approach used in~\cite{EicLow10,ChaSha11}. However, when the weight distribution has a sufficiently heavy tail, the limiting distribution is of a form that is not covered anymore by the density approach.

\paragraph{Annealed Ising model on inhomogeneous random graphs} As mentioned, the ICW arose as an approximation for the annealed Ising model on inhomogeneous random graphs. We expect that our results remain true for this model, although if one wants to prove this, extra error terms caused by the approximation with the ICW have to be taken into account.

\paragraph{Quenched Ising model on inhomogeneous random graphs} One can also look at the quenched Ising model on inhomogeneous random graphs, i.e., the Ising model on a fixed realization of the random graph. In~\cite{GiaGibHofPri15}, it is shown that also in this case the central limit theorem holds in the uniqueness CLT regime \eqref{CLTunique}. It would be interesting to also obtain the rate of convergence for this model. This model is not of mean-field type, but spins only interact with their direct neighbors, which makes finding a suitable regression equation more difficult.

\paragraph{Inhomogeneous versions of other mean-field models} It would be interesting to see if results for other mean-field models, such as the ones studied in \cite{EicMar14, EicMar15, KirMec13, KirNaw16}, can be generalized to an inhomogeneous setting. For this, a complete multi-dimensional version of Stein's method for unbounded random variables will have to be derived.

In~\cite{DomKulSch17}, continuous spin models on random graphs were studied in the annealed setting, also resulting in a mean-field approximation. It would be interesting to see if the central limit theorem for the sum of spins can also be proved using our techniques for that model.


\section{Stein's method, proof of Theorem~\ref{thm-MarginalStein}}\label{sec-Stein}

In this section, we prove the bound in~\eqref{eq-thmStein}, by using ideas from~\cite[Theorem~2.2]{ShaZha17} and~\cite{FanRol15}. Similar ideas to obtain one-dimensional CLTs in a multidimensional setting were used in~\cite[Construction 1C]{CheRol10}.
\begin{proof}[Proof of Theorem~\ref{thm-MarginalStein}]
Note that it follows from the regression equation~\eqref{eq-regressionMD} that, for any function $F:\mathbb{R}^d\to\mathbb{R}^d$ such that all expectations below exist,
\begin{align*}
\frac1{2\lambda} &\expec\left[\left(\Lambda^{-1}(X'-X)\right)^t \left(F(X')-F(X)\right)\right] \nn\\
& = \frac1{2\lambda} \expec\left[\left(\Lambda^{-1}(X'-X)\right)^t \left(F(X')+F(X) \right)\right] + \frac{1}{\lambda}\expec\left[\left(\Lambda^{-1}(X-X')\right)^t F(X)\right]\nn\\
& = \frac{1}{\lambda}\expec\left[\left(\Lambda^{-1}\expec[ D \given X] \right)^t F(X)\right]\nn\\
&= \expec\left[X^t F(X)\right] + \expec\left[(\Lambda^{-1}R)^t F(X)\right],
\end{align*}
where we used exchangeability in the second equality and~\eqref{eq-regressionMD} in the last equality. 
In particular, by choosing $F=f e_1$ for some function $f:\mathbb{R}\to\mathbb{R}$ such that all expectations below exist and rewriting,
\be\label{eq-XfX}
\expec\left[X_1 f(X_1)\right] = \frac{1}{2\lambda} \expec\left[\ell D \left(f(X_1)-f(X'_1)\right)\right] - \expec\left[\ell R f(X_1)\right].
\ee
Here $\ell$ denotes the first row of $\Lambda^{-1}$.

\noindent
For $z\in\mathbb{R}$, let $f_z$ be the solution of the Stein equation
\be\label{eq-steineq}
f'_{z}(x)-x f_{z}(x) = \ind{x \leq z} - \Phi(z),
\ee
where $\Phi(z)$ is the distribution function of a standard normal random variable. The background of this equation
reads as follows. A standard Gaussian random variable $Z$ is characterized by the fact that for every absolutely continuous function $f:\bbR \to \bbR$ for which $\expec \big[Zf(Z)\big]<\infty$ it holds that 
 \begin{equation} \label{steinnormal}
 \expec \big[f'(Z)-Zf(Z)\big]=0.
 \end{equation}
This together with the definition of the Kolmogorov-distance is the motivation to study the Stein equation.
If we replace $x$ by a random variable $X$ and take expectations in the Stein equation \eqref{eq-steineq}, we infer that 
$$\expec \big[f_z'(X)-Xf_z(X)\big]= \prob [X \leq z] - \Phi(z).
$$
The curious fact is that the left hand side of the last equation is frequently much simpler to bound than the right hand side and leads to the successfulness of the method. 

As shown in, e.g.,~\cite[Lemma~2.3]{CheGolSha11}, $f_z$ satisfies, for all $x\in\mathbb{R}$,
\be\label{eq-steineqprop1}
|x f_z(x)|\leq1,\qquad  |f'_z(x)|\leq 1, \qquad 0<f_z(x)\leq  \frac{\sqrt{2\pi}}{4},
\ee
and $x f_z(x)$ is an increasing function of $x$.

\noindent
If we take $x=X_1$ in~\eqref{eq-steineq} and take expectations on  both sides, we get, also using~\eqref{eq-XfX},
\begin{align*}
\prob[X_1\leq z] - \Phi(z) &= \expec\left[f'_z(X_1)-X_1 f_z(X_1)\right] \nn\\
&=\expec\left[f'_z(X_1)\right] - \frac{1}{2\lambda} \expec\left[\ell D \left(f_z(X_1)-f_z(X'_1)\right)\right] + \expec\left[\ell R f_z(X_1)\right]\nn\\
&=\expec\left[f'_z(X_1)\left(1-\frac{1}{2\lambda}\ell D D_1\right)\right] +\frac{1}{2\lambda} \expec\left[\ell D \int_{-D_1}^0 f_z'(X_1) \dint t \right]\nn\\
&\qquad - \frac{1}{2\lambda} \expec\left[\ell D \int_{-D_1}^0 f'_z(X_1+t) \dint t\right] + \expec\left[\ell R f_z(X_1)\right]\nn\\
&=\expec\left[f'_z(X_1)\left(1-\frac{1}{2\lambda}\expec\left[\ell D D_1\given X\right]\right)\right] +\frac{1}{2\lambda} \expec\left[\ell D \int_{-D_1}^0 \bigl( f_z'(X_1)-f'_z(X_1+t) \bigr) \dint t \right]\nn\\
&\qquad + \expec\left[\ell R f_z(X_1)\right].
\end{align*}
Hence, we can bound, using~\eqref{eq-steineqprop1},
\begin{align}\label{eq-absPminusPhi}
\left|\prob[X_1\leq z] - \Phi(z)\right| &\leq \expec\left[\left|1-\frac{1}{2\lambda}\expec\left[\ell D D_1 \given X\right]\right|\right] + \frac{1}{2\lambda}\left|\expec\left[\ell D \int_{-D_1}^0 \bigl( f_z'(X_1)-f'_z(X_1+t) \bigr) \dint t \right] \right| \nn\\
&\qquad + \frac{\sqrt{2\pi}}{4} \expec\left[|\ell R|\right].
\end{align}
We use~\eqref{eq-steineq} again to rewrite the second term as:
\begin{align*}
\frac{1}{2\lambda}&\left|\expec\left[\ell D \int_{-D_1}^0 \bigl( f_z'(X_1)-f'_z(X_1+t) \bigr) \dint t \right] \right| \nn\\
& = \frac{1}{2\lambda}\left|\expec\left[\ell D \int_{-D_1}^0 \bigl( X_1f_z(X_1)-(X_1+t)f_z(X_1+t)+\ind{X_1\leq z}-\ind{X_1+t\leq z} \bigr) \dint t \right] \right| \nn\\
&\leq \frac{1}{2\lambda}\left(\left|\expec\left[\ell D \int_{-D_1}^0 \bigl( X_1f_z(X_1)-(X_1+t)f_z(X_1+t) \bigr) \dint t \right] \right|+\left|\expec\left[\ell D \int_{-D_1}^0 \bigl( \ind{X_1\leq z}-\ind{X_1+t\leq z} \bigr) \dint t \right] \right|\right) \nn\\
&=: \frac{1}{2\lambda}\left(\left|I_1 \right|+\left|I_2\right|\right).
\end{align*}
Since $x f_z(x)$ is increasing in $x$,
\begin{align*}
0\leq \int_{-D_1}^0 \bigl( X_1f_z(X_1)-(X_1+t)f_z(X_1+t) \bigr) \dint t &\leq \int_{-D_1}^0 \bigl( X_1f_z(X_1)-(X_1-D_1)f_z(X_1-D_1) \bigr) \dint t \nn\\
&=D_1 \left(X_1f_z(X_1)-X_1' f_z(X_1')\right).
\end{align*}
Hence,
\begin{align*}
I_1 &= \expec\left[\ell D\left(\ind{\ell D<0}+\ind{\ell D>0}\right) \int_{-D_1}^0 \bigl( X_1f_z(X_1)-(X_1+t)f_z(X_1+t) \bigr) \dint t \right] \nn\\
&\leq \expec\left[\ell D\ind{\ell D>0} \int_{-D_1}^0 \bigl( X_1f_z(X_1)-(X_1+t)f_z(X_1+t) \bigr) \dint t \right] \nn\\
&\leq \expec\left[|\ell D| \ind{\ell D>0} D_1 \left(X_1f_z(X_1)-X_1' f_z(X_1')\right) \right] \nn\\
&= \expec\left[|\ell D| \left(\ind{\ell D<0}+\ind{\ell D>0}\right) D_1 X_1f_z(X_1) \right] \nn\\
&= \expec\left[\expec\left[|\ell D| D_1 \given X\right] X_1f_z(X_1) \right] \nn\\
&\leq \expec\left[\left|\expec\left[|\ell D| D_1 \given X\right] \right| \right],
\end{align*}
where we used that it follows from exchangeability that
$$
\expec\left[|\ell D| \ind{\ell D>0} D_1 \left(X_1' f_z(X_1')\right) \right] =-\expec\left[|\ell D| \ind{\ell D<0} D_1 \left(X_1 f_z(X_1)\right) \right],
$$
and that $|X_1f_z(X_1)|\leq1$. Similarly,
\begin{align*}
I_1 &\geq \expec\left[\ell D\ind{\ell D<0} \int_{-D_1}^0 \bigl( X_1f_z(X_1)-(X_1+t)f_z(X_1+t) \bigr) \dint t \right] \nn\\
&\geq -\expec\left[|\ell D| \ind{\ell D<0} D_1 \left(X_1f_z(X_1)-X_1' f_z(X_1')\right) \right] \nn\\
&\geq -\expec\left[\left|\expec\left[|\ell D| D_1 \given X\right] \right| \right],
\end{align*}
Combining the upper and lower bound on $I_1$ gives,
\be\label{eq-bdI1}
|I_1| \leq \expec\left[\left|\expec\left[|\ell D| D_1 \given X\right] \right| \right].
\ee

\noindent
We can show in a similar way, using that $\ind{x \leq z}$ is non-increasing in $x$, that also
\be\label{eq-bdI2}
|I_2| \leq \expec\left[\left|\expec\left[|\ell D| D_1 \given X\right] \right| \right].
\ee
Combining~\eqref{eq-absPminusPhi},~\eqref{eq-bdI1} and~\eqref{eq-bdI2} proves the theorem.
\end{proof}


\section{Berry-Esseen bound for the ICW, proof of Theorem~\ref{thm-berryesseen-magnetization}}\label{sec-BEICW}
We now use Theorem~\ref{thm-MarginalStein} to prove the Berry-Esseen bound in~\eqref{eq-berryesseenbound}. First, we define our exchangeable pairs and derive the regression equation in Section~\ref{sec-regression}. Then we prove that the central limit theorem holds for the weighted spin sum in Section~\ref{sec-MGFs} and in particular also show that certain moments converge to that of the normal distribution. Finally, in Section~\ref{sec-errorterms}, we bound all terms of~\eqref{eq-thmStein} to prove Theorem~\ref{thm-berryesseen-magnetization}.
\subsection{Exchangeable pairs and regression equation}\label{sec-regression}
We let $X_n$ be as in~\eqref{eq-defXn}.
Let $I \sim Uni[n]$ and let $\sigma'_I$ be drawn from the conditional distribution given $(\sigma_j)_{j\neq I}$. Define $X'_n$ as
$$
X'_n = X_n - \frac{1}{\sqrt{n}}\frac{\sigma_I- \sigma'_I}{\sqrt{\chi_n}}.
$$
Then, $(X_n, X'_n)$ indeed is an exchangeable pair. 

\noindent
Let
$$
\tilde{M}_n := \tilde{M}_n(\beta,h) := \sqrt{\frac{\expec[W_n]}{\beta}} x_n^*,
$$
with $x_n^*$ given in \eqref{eq-fixedpoint-xn}. Let
$$
\tilde{m}_n = \frac1n \sum_{j\in[n]} w_j \sigma_j.
$$
Let us define the constant
\begin{equation} \label{sigma}
\sigma^2(x_n^*, \beta,h) := \frac{1}{1-\frac{\beta}{\expec[W_n]}\expec\left[\left(1-\tanh^2\left(\sqrt{\frac{\beta}{\expec[W_n]}}W_n x_n^* + h\right)\right)W_n^2\right]},
\end{equation}
and let $\tilde{\chi}_n$ be the constant given by
\begin{align}\label{eq-choicetildechi}
\tilde{\chi}_n := \tilde{\chi}_n(\beta,h) &:= \sigma^2(x_n^*, \beta,h) \expec\left[\left(1-\tanh^2\left(\sqrt{\frac{\beta}{\expec[W_n]}}W_n x_n^* + h\right)\right)W_n^2\right] \nn\\
&= \frac{\expec\left[\left(1-\tanh^2\left(\sqrt{\frac{\beta}{\expec[W_n]}}W_n x_n^* + h\right)\right)W_n^2\right]}{1-\frac{\beta}{\expec[W_n]}\expec\left[\left(1-\tanh^2\left(\sqrt{\frac{\beta}{\expec[W_n]}}W_n x_n^* + h\right)\right)W_n^2\right]}.
\end{align} 
\noindent
Now define $\tilde{X}_n$ as
\be\label{eq-deftildeXn}
\tilde{X}_n =\sqrt{n} \frac{\tilde{m}_n - \tilde{M}_n}{\sqrt{\tilde{\chi}_n}},
\ee
and
$$
\tilde{X}'_n = \tilde{X}_n - \frac{1}{\sqrt{n}}\frac{w_I(\sigma_I- \sigma'_I)}{\sqrt{\tilde{\chi}_n}},
$$
so that also $(\tilde{X}_n, \tilde{X}'_n)$ is an exchangeable pair. 

\noindent
From now on, we write $X=(X_n,\tilde{X}_n)^t$ and $X'=(X'_n,\tilde{X}'_n)^t$.

Denote by $\calF_n$ the sigma algebra generated by $(\sigma_i)_{i\in[n]}$ and by $\calF_n^i$ the sigma algebra generated by $(\sigma_j)_{j\in[n], j \neq i}$. 
Also define
\begin{equation} \label{mni}
\tilde{m}_n^i = \frac1n \sum_{j\in[n]: j\neq i} w_j \sigma_j.
\end{equation}
Then, we can compute
\begin{align*}
\mu_n(\sigma_i \,|\, \calF_n^i) &= \frac{\exp\left(\frac{\beta}{2\ell_n} \left(\sum_{j\in[n]: j\neq i} w_j\sigma_j + w_i\sigma_i\right)^2+h\sum_{j\in[n]}\sigma_j\right)}{\sum_{\sigma'_i\in\{-1,1\}}\exp\left(\frac{\beta}{2\ell_n} \left(\sum_{j\in[n]: j\neq i} w_j\sigma_j + w_i\sigma'_i\right)^2+h\sum_{j\in[n]:j\neq i}\sigma_j+h\sigma'_i \right)} \nn\\
&= \frac{\exp\left(\frac{\beta}{\ell_n} w_i \sigma_i \sum_{j\in[n]: j\neq i} w_j\sigma_j + h\sigma_i\right)}{\exp\left(\frac{\beta}{\ell_n} w_i \sum_{j\in[n]: j\neq i} w_j\sigma_j + h\right)+\exp\left(-\left(\frac{\beta}{\ell_n} w_i \sum_{j\in[n]: j\neq i} w_j\sigma_j + h\right)\right)},
\end{align*}
and hence,
\be\label{eq-sigmaprimegivenF}
\expec[\sigma'_i \,|\, \calF_n] =\expec[\sigma_i \,|\, \calF_n^i]= \tanh\left(\frac{\beta}{\ell_n} w_i \sum_{j\in[n]: j\neq i} w_j\sigma_j + h\right) = \tanh\left(\frac{\beta w_i}{\expec[W_n]} \tilde{m}_n^i+h\right).
\ee
We obtain the following regression equation.

\begin{lemma}\label{lem-regression} Let us define
\be\label{eq-deGn}
G_n(x; s) = \frac{x^2}{2} - \expec\biggl[\log \cosh \biggl(\sqrt{\frac{\beta}{\expec[W_n]}}W_n (x+s)+h\biggr)\biggr],
\ee
and $G_n(x) = G_n(x;0)$. Then for $(\beta,h)\in\calU$ we obtain that
\be\label{eq-regressionICW}
\expec\left[\begin{pmatrix}X_n \\ \tilde{X}_n \end{pmatrix}-\begin{pmatrix} X'_n \\ \tilde{X}'_n \end{pmatrix}  \,|\, \calF_n\right] = \lambda \begin{pmatrix}1 & -c \\ 0 & 1/\sigma^2(x_n^*,\beta,h) \end{pmatrix} \begin{pmatrix}X_n \\ \tilde{X}_n \end{pmatrix} +\lambda \begin{pmatrix} R_1 +R_2 \\ \tilde{R_1}+ \tilde{R_2}\end{pmatrix},
\ee
where $\sigma^2(x_n^*,\beta,h)$ is given in \eqref{sigma}, and 
\be\label{eq-deflambdasigma}
\lambda=1/n, \qquad \sigma^2(x_n^*,\beta,h)= \frac{1}{G''_n(x^*_n)},
\ee
(the latter equality follows from \eqref{eq-deGn}, see below \eqref{eq-Gn2})
\be\label{eq-defc}
c=\frac{\sqrt{\tilde{\chi}_n}}{\sqrt{\chi_n}} \frac{\beta}{\expec[W_n]} \expec\left[ \left(1-\tanh^2\left(\sqrt{\frac{\beta}{\expec[W_n]}} W_n x_n^*+h\right)\right) W_n \right],
\ee
and the error terms are given by
\begin{align}
\label{eq-defR1}
R_1&=  \frac{\sqrt{n}}{\sqrt{\chi_n}}\frac{1}{n}\sum_{i\in[n]} \left( \tanh\left(\frac{\beta w_i}{\expec[W_n]} \tilde{m}_n+h\right) - \tanh\left(\frac{\beta w_i}{\expec[W_n]} \tilde{m}_n^i+h\right) \right), \\
\label{eq-defR2}
R_2&=\frac{\sqrt{n}}{\sqrt{\chi_n}} \frac{1}{n} \sum_{i\in[n]}\left( \tanh\left(\sqrt{\frac{\beta}{\expec[W_n]}} w_i x_n^*+h\right) - \tanh\left(\frac{\beta w_i}{\expec[W_n]} \tilde{m}_n+h\right)  \right)+c \tilde{X}_n, \\
\label{eq-deftildeR1}
\tilde{R}_1 &=   \frac{\sqrt{n}}{\sqrt{\tilde{\chi}_n}} \frac{1}{n}\sum_{i\in[n]}w_i \left( \tanh\left(\frac{\beta w_i}{\expec[W_n]} \tilde{m}_n+h\right) - \tanh\left(\frac{\beta w_i}{\expec[W_n]} \tilde{m}_n^i+h\right) \right), \\
\label{eq-deftildeR2}
\tilde{R}_2 &= \frac{\sqrt{n}}{\sqrt{\tilde{\chi}_n}} \sqrt{\frac{\expec[W_n]}{\beta}} G'_n\left(\sqrt{\frac{\beta}{\expec[W_n]}} \tilde{m}_n\right) - \frac{1}{\sigma^2(x_n^*,\beta,h)}\tilde{X}_n.
\end{align}
\end{lemma}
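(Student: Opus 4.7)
The proof is an algebraic bookkeeping computation: compute the two conditional expectations from the definition of the exchangeable pairs, then add and subtract the right quantities to produce the stated decomposition. No analytic smallness is needed here, so my plan is to just verify the identity component by component.

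My first step is to start from the single-site update. Since $I\sim Uni[n]$ and $\sigma_I'$ is drawn from $\mu_n(\cdot\given \calF_n^I)$, equation \eqref{eq-sigmaprimegivenF} gives $\expec[\sigma_i-\sigma_i'\given \calF_n]=\sigma_i-\tanh(\beta w_i\tilde{m}_n^i/\expec[W_n]+h)$, and averaging over $I$ yields
\begin{align*}
\expec[X_n-X_n'\given \calF_n] &= \frac{1}{n}\cdot\frac{\sqrt{n}}{\sqrt{\chi_n}}\Bigl(m_n - \frac{1}{n}\sum_{i\in[n]}\tanh\bigl(\tfrac{\beta w_i}{\expec[W_n]}\tilde{m}_n^i+h\bigr)\Bigr),\\
\expec[\tilde{X}_n-\tilde{X}_n'\given \calF_n] &= \frac{1}{n}\cdot\frac{\sqrt{n}}{\sqrt{\tilde{\chi}_n}}\Bigl(\tilde{m}_n - \frac{1}{n}\sum_{i\in[n]}w_i\tanh\bigl(\tfrac{\beta w_i}{\expec[W_n]}\tilde{m}_n^i+h\bigr)\Bigr).
\end{align*}
This is where the factor $\lambda=1/n$ is born.

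For the first component I will perform two add-and-subtract steps. First replace $\tilde{m}_n^i$ by $\tilde{m}_n$ inside the tanh; the difference is precisely $R_1$ as defined in \eqref{eq-defR1}. Next, using that $W_n=w_I$ so that $M_n=\frac{1}{n}\sum_i\tanh(\sqrt{\beta/\expec[W_n]}\,w_i x_n^*+h)$ and that $X_n=\sqrt{n}(m_n-M_n)/\sqrt{\chi_n}$, replace $\tanh(\tfrac{\beta w_i}{\expec[W_n]}\tilde{m}_n+h)$ by $\tanh(\sqrt{\beta/\expec[W_n]}\,w_i x_n^*+h)$; the remaining term is exactly $R_2-c\tilde{X}_n$ by the definition of $R_2$ in \eqref{eq-defR2}. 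Altogether the first component becomes $\lambda(X_n-c\tilde{X}_n+R_1+R_2)$, matching the top row of the regression matrix.

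For the second component the same first replacement produces $\tilde{R}_1$ as in \eqref{eq-deftildeR1}. The remaining quantity is $\frac{\sqrt{n}}{\sqrt{\tilde{\chi}_n}}\bigl(\tilde{m}_n-\frac{1}{n}\sum_i w_i\tanh(\tfrac{\beta w_i}{\expec[W_n]}\tilde{m}_n+h)\bigr)$; differentiating the definition \eqref{eq-deGn} of $G_n$ and using $W_n=w_I$ one gets
\[
G_n'(x) = x - \sqrt{\tfrac{\beta}{\expec[W_n]}}\cdot\frac{1}{n}\sum_{i\in[n]}w_i\tanh\bigl(\sqrt{\tfrac{\beta}{\expec[W_n]}}\,w_i x+h\bigr),
\]
so evaluating at $x=\sqrt{\beta/\expec[W_n]}\,\tilde{m}_n$ shows the bracketed expression equals $\sqrt{\expec[W_n]/\beta}\,G_n'(\sqrt{\beta/\expec[W_n]}\,\tilde{m}_n)$. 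Adding and subtracting $\tilde{X}_n/\sigma^2$ gives $\tilde{X}_n/\sigma^2+\tilde{R}_2$ with $\tilde{R}_2$ as in \eqref{eq-deftildeR2}, producing the bottom row of the regression matrix.

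The only mildly subtle step is the second one: recognizing the weighted version of the tanh average as $\sqrt{\expec[W_n]/\beta}\,G_n'$. Everything else is a mechanical telescoping of the tanh terms through the three reference points $\tilde{m}_n^i\to\tilde{m}_n\to\tilde{M}_n$ (respectively $\sqrt{\beta/\expec[W_n]}\,\tilde{m}_n \to x_n^*$). The hypothesis $(\beta,h)\in\calU$ plays no role in the derivation itself (it only ensures that the fixed point $x_n^*$, and hence $M_n$, $\tilde M_n$, $\chi_n$ and $\tilde\chi_n$, are well defined in the first place, as will be shown in Lemma~\ref{lem-globalminGn}).
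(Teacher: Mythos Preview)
Your proposal is correct and follows essentially the same approach as the paper's proof: compute the conditional expectations via \eqref{eq-sigmaprimegivenF}, telescope the $\tanh$ terms through $\tilde m_n^i\to\tilde m_n\to x_n^*$ to extract $R_1,R_2,\tilde R_1$, and recognize the weighted tanh average as $\sqrt{\expec[W_n]/\beta}\,G_n'$ to produce $\tilde R_2$. The only cosmetic difference is that the paper treats the $\tilde X_n$ component first and motivates the appearance of $-c\tilde X_n$ through the first-order Taylor expansion~\eqref{eq-TaylorR2}, whereas you obtain it directly from the definition of $R_2$; both are equivalent bookkeeping.
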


\begin{proof}
We start by computing $\expec[\tilde{X}_n-\tilde{X}_n' \,|\, \calF_n] $. For this, note that 
\be\label{eq-Xn-Xnprime}
\tilde{X}_n-\tilde{X}_n' =  \frac{1}{\sqrt{n}}\frac{w_I(\sigma_I- \sigma'_I)}{\sqrt{\tilde{\chi}_n}}.
\ee
Hence,
\begin{align*}
\expec[\tilde{X}_n-\tilde{X}_n'  \,|\, \calF_n] & =\frac{1}{\sqrt{n}\sqrt{\tilde{\chi}_n}} \frac{1}{n} \sum_{i\in[n]} w_i \expec[\sigma_i-\sigma'_i \,|\, \calF_n] \nn\\
&=\frac{1}{\sqrt{n}\sqrt{\tilde{\chi}_n}} \tilde{m}_n -\frac{1}{\sqrt{n}\sqrt{\tilde{\chi}_n}} \frac{1}{n}\sum_{i\in[n]}w_i \expec[\sigma'_i \,|\, \calF_n]\nn\\
&= \frac{1}{\sqrt{n}\sqrt{\tilde{\chi}_n}} \tilde{m}_n - \frac{1}{\sqrt{n}\sqrt{\tilde{\chi}_n}} \frac{1}{n}\sum_{i\in[n]}w_i  \tanh\left(\frac{\beta w_i}{\expec[W_n]} \tilde{m}_n^i+h\right) \nn\\
& = \frac{1}{\sqrt{n}\sqrt{\tilde{\chi}_n}} \tilde{m}_n -\frac{1}{\sqrt{n}\sqrt{\tilde{\chi}_n}} \frac{1}{n}\sum_{i\in[n]}w_i  \tanh\left(\frac{\beta w_i}{\expec[W_n]} \tilde{m}_n+h\right) +\lambda \tilde{R}_1,
\end{align*}
where $\tilde{R}_1$ is given in~\eqref{eq-deftildeR1}. Observe that it follows immediately from the definition of $G_n$ in~\eqref{eq-deGn} that
\begin{align}\label{eq-der-Gn}
G_n'(x) &= x - \expec\left[\tanh\left(\sqrt{\frac{\beta}{\expec[W_n]}}W_n x + h\right) \sqrt{\frac{\beta}{\expec[W_n]}}W_n \right] \nn\\
&= x -\frac{1}{n} \sum_{i\in[n]} \tanh\left(\sqrt{\frac{\beta}{\expec[W_n]}}w_i x + h\right) \sqrt{\frac{\beta}{\expec[W_n]}}w_i,
\end{align}
and hence, with $x=\sqrt{\frac{\beta}{\expec[W_n]}}\tilde{m}_n$,
\be\label{eq-regressiontildeX}
\expec[\tilde{X}_n-\tilde{X}_n' \,|\, \calF_n] -\lambda \tilde{R}_1 = \frac{1}{\sqrt{n}\sqrt{\tilde{\chi}_n}} \sqrt{\frac{\expec[W_n]}{\beta}} G'_n\left(\sqrt{\frac{\beta}{\expec[W_n]}} \tilde{m}_n\right)= \frac{\lambda}{\sigma^2(x_n^*,\beta,h)} \tilde{X}_n +\lambda\tilde{R}_2,
\ee
where $\lambda$ and $\sigma^2(x_n^*,\beta,h)$ are given in~\eqref{eq-deflambdasigma} and $\tilde{R}_2$ in~\eqref{eq-deftildeR2}. Note that $\frac{\lambda}{\sigma^2(x_n^*,\beta,h)} \tilde{X}_n$ is equal to the first order Taylor expansion of $G_n'$ around $x_n^*$ and  therefore it is to be expected that $\lambda\tilde{R}_2$, which is the error made in doing so, is small.

Similarly,
\begin{align*}
\expec[X_n&-X_n' \,|\, \calF_n]  = \frac{1}{\sqrt{n}\sqrt{\chi_n}} m_n - \frac{1}{\sqrt{n}\sqrt{\chi_n}} \frac{1}{n}\sum_{i\in[n]}  \tanh\left(\frac{\beta w_i}{\expec[W_n]} \tilde{m}_n^i+h\right) \\
&= \lambda X_n + \frac{1}{\sqrt{n}\sqrt{\chi_n}} \frac{1}{n} \sum_{i\in[n]}\left( \tanh\left(\sqrt{\frac{\beta}{\expec[W_n]}} w_i x_n^*+h\right) - \tanh\left(\frac{\beta w_i}{\expec[W_n]} \tilde{m}_n+h\right)  \right)  +\lambda R_1. \nn
\end{align*}
Using a Taylor expansion, we see that $\tanh(x) \approx \tanh(a)+(1-\tanh^2(a))(x-a)$ for $x$ close to $a$. Hence,
\begin{align}\label{eq-TaylorR2}
\frac{1}{\sqrt{n}\sqrt{\chi_n}} \frac{1}{n} &\sum_{i\in[n]}\left( \tanh\left(\sqrt{\frac{\beta}{\expec[W_n]}} w_i x_n^*+h\right) - \tanh\left(\frac{\beta w_i}{\expec[W_n]} \tilde{m}_n+h\right)  \right) \nn\\
&\approx- \frac{1}{\sqrt{n}\sqrt{\chi_n}} \frac{1}{n} \sum_{i\in[n]} \left(1-\tanh^2\left(\sqrt{\frac{\beta}{\expec[W_n]}} w_i x_n^*+h\right)\right) w_i \frac{\beta}{\expec[W_n]} \left(\tilde{m}_n-\sqrt{\frac{\expec[W_n]}{\beta}}x_n^* \right) \nn\\
&= - \frac{1}{\sqrt{n}\sqrt{\chi_n}} \expec\left[ \left(1-\tanh^2\left(\sqrt{\frac{\beta}{\expec[W_n]}} W_n x_n^*+h\right)\right) W_n \right] \frac{\beta}{\expec[W_n]} \frac{\sqrt{\tilde{\chi}_n}}{\sqrt{n}}\tilde{X}_n \nn\\
&= -\lambda c \tilde{X}_n,
\end{align}
where $c$ is defined in~\eqref{eq-defc}.
Hence, we write
\be\label{eq-regressionX}
\expec[X_n-X_n' \,|\, \calF_n]  = \lambda (X_n - c \tilde{X}_n) + \lambda (R_1+R_2),
\ee
where $R_2$ is given in~\eqref{eq-defR2}.

\noindent
The lemma follows by combining~\eqref{eq-regressiontildeX} and~\eqref{eq-regressionX}.
\end{proof}


\subsection{Central limit theorem for weighted spin sums}\label{sec-MGFs}
In this section, we show that the weighted sum of spins $\sum_{i\in[n]} w_i \sigma_i$ obeys a central limit theorem. We do this by showing that, if we normalize the sum properly, the moment generating function converges to that of a normal distribution. This implies that the normalized sum converges to a normal in distribution and, more importantly for us, that also all moments converge to that of this normal. We also investigate sums of differently weighted spins.

We use the methods to prove the convergence of pressure of the inhomogeneous Curie-Weiss model in~\cite[Sec.~2.1]{GiaGibHofPri16} to prove that certain cumulant generating functions converge, and the methods to prove the CLT for the spin sum in~\cite[Sec.~2.2]{GiaGibHofPri16}, of which the details can be found in the proof of the CLT for the quenched Ising model on random graphs in~\cite[Sec.~2.3]{GiaGibHofPri15}.

\begin{lemma}\label{lem-cmf-weigthedsum}
Define the cumulant generating function 
$$
c_n(s) = \frac1n \log \expec\biggl[\exp\biggl(s \sqrt{\frac{\beta}{\expec[W_n]}}\sum_{i\in[n]} w_i\sigma_i\biggr)\biggr].
$$
Then, for any constant $a$,
$$
c_n(s) = \frac1n \log \frac{\int_{-\infty}^\infty e^{-n G_n\left(\frac{x}{\sqrt{n}}+a;s\right)} \dint x}{\int_{-\infty}^\infty e^{-n G_n\left(\frac{x}{\sqrt{n}}+a\right)} \dint x},
$$
where $G_n(x;s)$ is defined in \eqref{eq-deGn} with $G_n(x) := G_n(x;0)$.
\end{lemma}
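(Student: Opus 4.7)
The plan is to apply the Hubbard-Stratonovich (Gaussian) identity
\[
e^{a^2/2} = \frac{1}{\sqrt{2\pi}}\int_{-\infty}^\infty e^{-t^2/2 + at}\,\dint t
\]
to linearize the quadratic interaction in the Hamiltonian. Once the quadratic term is replaced by an integral over a Gaussian variable, the sum over $\sigma\in\{-1,1\}^n$ decouples across sites and factors into a product of hyperbolic cosines, which is exactly the structure encoded by $G_n(\cdot;s)$.

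Concretely, I would first unfold the Gibbs expectation as
\[
c_n(s) = \frac{1}{n}\log \frac{1}{Z_n}\sum_{\sigma} \exp\biggl(\frac{\beta}{2\ell_n}\Bigl(\sum_{i}w_i\sigma_i\Bigr)^2 + h\sum_i \sigma_i + s\sqrt{\frac{\beta}{\expec[W_n]}}\sum_i w_i\sigma_i\biggr),
\]
and apply the Gaussian identity to the quadratic term with $a=\sqrt{\beta/(n\expec[W_n])}\sum_i w_i\sigma_i$ (recalling $\ell_n = n\expec[W_n]$). After the rescaling $t = \sqrt{n}\,u$ (which produces a factor $\sqrt{n/(2\pi)}$) and exchanging the integral with the spin sum, the exponent is linear in each $\sigma_i$; summing over $\sigma_i=\pm 1$ turns the product into $\prod_i 2\cosh((u+s)\sqrt{\beta/\expec[W_n]}\,w_i + h)$. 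Using the definition of $W_n$, namely $\frac{1}{n}\sum_i \log\cosh\bigl(\sqrt{\beta/\expec[W_n]}\,w_i(u+s)+h\bigr) = \expec[\log\cosh(\sqrt{\beta/\expec[W_n]}\,W_n(u+s)+h)]$, the exponent in the $u$-integral collapses to $-n G_n(u;s)$, giving
\[
\sum_{\sigma}\exp(\cdots) = \sqrt{\tfrac{n}{2\pi}}\,2^n \int_{\bbR} e^{-n G_n(u;s)}\,\dint u.
\]

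Running the identical calculation with $s=0$ yields the same formula for $Z_n$ with $G_n(u) = G_n(u;0)$, so in the ratio defining $c_n(s) = \frac{1}{n}\log(Z_n(s)/Z_n)$ the common prefactor $\sqrt{n/(2\pi)}\,2^n$ cancels. A final affine substitution $u = x/\sqrt{n}+a$ for any fixed $a\in\bbR$ has Jacobian $1/\sqrt{n}$, which likewise cancels between numerator and denominator, producing the stated formula. There is no real obstacle here: $G_n(\cdot;s)$ is engineered to be the effective action that emerges from the Hubbard-Stratonovich transform. The only bookkeeping point worth verifying is that the arbitrary shift $a$ genuinely drops out of the ratio, which it does because the same $a$ is used on top and bottom and the integration runs over all of $\bbR$.
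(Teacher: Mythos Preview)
Your proposal is correct and follows essentially the same route as the paper: Hubbard--Stratonovich to linearize the quadratic interaction, decouple and sum the spins into a product of hyperbolic cosines, identify the exponent as $-nG_n(\cdot;s)$, and then shift the integration variable by $\sqrt{n}\,a$ (with the common prefactors cancelling in the ratio). The only cosmetic difference is that you perform an intermediate rescaling $t=\sqrt{n}\,u$ before the final substitution $u=x/\sqrt{n}+a$, whereas the paper keeps the Gaussian variable unscaled and applies the single shift $x=z-\sqrt{n}a$ at the end.
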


\begin{proof}
Note that
\begin{align}\label{eq-cmf-weigthedsum}
\expec\left[e^{s \sqrt{\frac{\beta}{\expec[W_n]}}\sum_{i\in[n]} w_i\sigma_i}\right] &= \frac{\sum_{\sigma\in\{-1,1\}^n}e^{s \sqrt{\frac{\beta}{\expec[W_n]}}\sum_{i\in[n]} w_i\sigma_i}e^{\frac{\beta}{2 n\expec[W_n]}\left(\sum_{i\in[n]}w_i \sigma_{i}\right)^2+h\sum_{i \in[n]}\sigma_i} }{\sum_{\sigma\in\{-1,1\}^n}e^{\frac{\beta}{2 n\expec[W_n]}\left(\sum_{i\in[n]}w_i \sigma_{i}\right)^2+h\sum_{i \in[n]}\sigma_i}}\nn\\
&= \frac{\sum_{\sigma\in\{-1,1\}^n}e^{\frac{\beta}{2 n\expec[W_n]}\left(\sum_{i\in[n]}w_i \sigma_{i}\right)^2+\sum_{i \in[n]}(s\sqrt{\frac{\beta}{\expec[W_n]}}w_i+h)\sigma_i} }{\sum_{\sigma\in\{-1,1\}^n}e^{\frac{\beta}{2 n\expec[W_n]}\left(\sum_{i\in[n]}w_i \sigma_{i}\right)^2+h\sum_{i \in[n]}\sigma_i}}.
\end{align}
Hence, we can interpret the numerator as an inhomogeneous Curie-Weiss model, where also the field is inhomogeneous, i.e., the field at vertex $i$ is given by $s\sqrt{\frac{\beta}{\expec[W_n]}}w_i+h$. We can use the Hubbard-Stratonovich transform $e^{\frac{t^2}{2}}=\expec[e^{tZ}]$, where $Z$ is a standard normal random variable, to rewrite the numerator of~\eqref{eq-cmf-weigthedsum} as
\begin{align*}
\sum_{\sigma\in\{-1,1\}^n}&\expec\Bigl[e^{\sqrt{\frac{\beta}{n\expec[W_n]}}\left(\sum_{i\in[n]}w_i \sigma_{i}\right)Z}\Bigr]e^{\sum_{i \in[n]}(s\sqrt{\frac{\beta}{\expec[W_n]}}w_i+h)\sigma_i} 
= 2^n \expec\Bigl[e^{\sum_{i\in[n]}\log\cosh\left(\sqrt{\frac{\beta}{\expec[W_n]}}w_i \left(\frac{Z}{\sqrt{n}}+s\right)+h\right)} \Bigr] \nn\\
&= \frac{2^n}{\sqrt{2\pi}} \int_{-\infty}^{\infty} e^{\sum_{i\in[n]}\log\cosh\left(\sqrt{\frac{\beta}{\expec[W_n]}}w_i \left(\frac{z}{\sqrt{n}}+s\right)+h\right)}e^{-\frac{z^2}{2}} \dint z\nn\\
&= \frac{2^n}{\sqrt{2\pi}} \int_{-\infty}^{\infty} e^{-n\left[\frac12\left(\frac{z}{\sqrt{n}}\right)^2-\expec\left[\log\cosh\left(\sqrt{\frac{\beta}{\expec[W_n]}} W_n \left(\frac{z}{\sqrt{n}}+s\right)+h\right)\right]\right]} \dint z \nn\\
&=\frac{2^n}{\sqrt{2\pi}} \int_{-\infty}^{\infty} e^{-nG_n\left(\frac{z}{\sqrt{n}};s\right)} \dint z=\frac{2^n}{\sqrt{2\pi}} \int_{-\infty}^{\infty} e^{-nG_n\left(\frac{x}{\sqrt{n}}+a;s\right)} \dint x,
\end{align*}
where we used the change of variables $x=z-\sqrt{n}a$ in the last equality.
The same computation can be done for the denominator of~\eqref{eq-cmf-weigthedsum}, by setting $s=0$.
\end{proof}

An important role is played by the global minimum of $G_n(x)$. In~\cite{GiaGibHofPri16}, it is shown that for $(\beta,h)\in\calU$ in the limit $n\to\infty$ the global minimizer is given by the unique solution with the same sign as $h$ of the fixed point equation~\eqref{eq-fixedpoint-x}.
We give a characterization of the global minimizer for finite $n$ in the next lemma.

\begin{lemma}\label{lem-globalminGn}
Suppose that Condition~\ref{cond-WeightReg}(i)--(ii) holds and that $n$ is large enough. Then, for $0\leq\beta <\beta_c$
and $h=0$, the global minimizer of $G_n(x)$ is given by $x_n^*=0$. For $\beta\geq 0, h\neq 0$, the global minimizer of $G_n(x)$ is given by the unique fixed point $x_n^*$ with the same sign as $h$ of the fixed point equation \eqref{eq-fixedpoint-xn}.
Furthermore, for all $(\beta,h) \in \calU$, 
$$
G_n''(x_n^*)>0.
$$
\end{lemma}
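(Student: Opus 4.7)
The plan is to analyze $G_n$ via its first two derivatives. Setting $F(x) := \expec\bigl[\tanh\bigl(\sqrt{\beta/\expec[W_n]}\,W_n x + h\bigr)\sqrt{\beta/\expec[W_n]}\,W_n\bigr]$, one has $G_n'(x) = x - F(x)$ from~\eqref{eq-der-Gn} and $G_n''(x) = 1 - F'(x)$, so critical points of $G_n$ are exactly the fixed points of~\eqref{eq-fixedpoint-xn}. Since $|F|$ is bounded, $G_n(x) \to \infty$ as $|x| \to \infty$, so a global minimizer exists; the task is to identify it and check strict convexity there.

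For $h=0$ and $0 \le \beta \le \beta_c$, $G_n$ is even so $x=0$ is a critical point. Since $F'$ is strictly maximized at $0$ (the $\tanh^2$ term inside its integrand vanishes only there) and $F'(0) = \beta\expec[W_n^2]/\expec[W_n] \to \beta/\beta_c \le 1$ by Condition~\ref{cond-WeightReg}(ii), for $n$ large enough $F' \le 1$ everywhere with equality only at $0$, so $G_n'$ is strictly increasing and $x_n^* = 0$ is the unique critical point. The marginally delicate subcase $\beta = \beta_c$ for finite $n$ is absorbed into the qualifier ``$n$ large enough''.

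For $h \ne 0$, by the symmetry $(x,h) \mapsto (-x,-h)$ I may assume $h > 0$. The crux, and the step I expect to be the main obstacle, is to restrict the minimizer to $(0,\infty)$: I will use that $\log\cosh$ is even and strictly increasing on $[0,\infty)$, so for any $y > 0, h > 0$ the elementary inequality $|y-h| < y+h$ yields $\log\cosh(y-h) < \log\cosh(y+h)$. Applying this pointwise with $y = \sqrt{\beta/\expec[W_n]}\,W_n x$ for $x > 0$ gives $G_n(x) < G_n(-x)$, ruling out minimizers in $(-\infty,0)$; combined with $G_n'(0) = -F(0) = -\tanh(h)\sqrt{\beta\expec[W_n]} < 0$, the minimizer is strictly positive. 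For uniqueness in $(0,\infty)$, I observe that for $x \ge 0, h > 0$ the argument of $\tanh$ is positive, hence lies in its strict concavity region, so $F$ is strictly concave on $[0,\infty)$. Thus $g(x) := F(x) - x$ is strictly concave with $g(0) > 0$ and $g(x) \to -\infty$, giving a unique zero $x_n^* > 0$, which is then the global minimizer.

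Finally, $G_n''(x_n^*) > 0$ on $\mathcal{U}$: for $h=0, 0 < \beta < \beta_c$, $G_n''(0) \to 1 - \beta/\beta_c > 0$. For $h \ne 0$, positivity of $g$ on $[0, x_n^*)$ combined with strict concavity forces $g'(x_n^*) < 0$ (otherwise the tangent bound at $x_n^*$ would give $g(x) < g'(x_n^*)(x-x_n^*) \le 0$ for $x \in (0, x_n^*)$, contradicting $g > 0$ there), hence $G_n''(x_n^*) = -g'(x_n^*) > 0$.
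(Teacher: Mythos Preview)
Your argument is correct and follows essentially the same route as the paper for locating the minimizer: both use that $H_n$ (your $F$) is concave on $[0,\infty)$ when $h>0$ to get a unique positive fixed point, and both use the parity comparison $G_n(x)<G_n(-x)$ to exclude negative minimizers. Your justification of the latter via $|y-h|<y+h$ and monotonicity of $\log\cosh$ on $[0,\infty)$ is more explicit than the paper, which simply asserts $G_n(x^-)>G_n(-x^-)$.

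The genuine difference is in the proof of $G_n''(x_n^*)>0$ for $h\neq 0$. The paper argues indirectly: since $x_n^*$ is the global minimizer one has $G_n''(x_n^*)\ge 0$, and strictness is then deduced from the external fact that $\chi_n\to\chi<\infty$ together with the formula for $\chi_n$ that has $G_n''(x_n^*)$ in a denominator. Your argument is self-contained: from strict concavity of $g=F-x$ on $[0,\infty)$ and $g(0)>0=g(x_n^*)$, the tangent bound forces $g'(x_n^*)<0$, i.e.\ $G_n''(x_n^*)>0$. This avoids appealing to the susceptibility analysis from~\cite{GiaGibHofPri16} and is the cleaner route. Both proofs share the same mild looseness at $\beta=\beta_c,\,h=0$ (where the sign of $1-\beta\,\expec[W_n^2]/\expec[W_n]$ at finite $n$ is not controlled by Condition~\ref{cond-WeightReg}), but this point lies outside $\mathcal U$ and is not used elsewhere.
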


\begin{proof}
Note that $G_n$ is continuous and 
$$
G_n'(x) = x - \expec\left[\tanh\left(\sqrt{\frac{\beta}{\expec[W_n]}}W_n x + h\right)\sqrt{\frac{\beta}{\expec[W_n]}} W_n \right],
$$
and hence the global minimizer has to satisfy~\eqref{eq-fixedpoint-xn}. We can also compute the second derivative:
\be\label{eq-Gn2}
G_n''(x)=1-\expec\left[\left(1-\tanh^2\left(\sqrt{\frac{\beta}{\expec[W_n]}}W_n x + h\right)\right)\frac{\beta}{\expec[W_n]} W_n^2 \right].
\ee
For $x\neq0$, it holds that $0<\tanh^2(x)\leq1$, and hence we can bound
$$
G_n''(x) > 1-\beta \frac{\expec[W_n^2]}{\expec[W_n]}.
$$
Therefore $G_n''(x)>0$ for $x\neq0$ and $\beta < \beta_c$ and $n$ large enough by Condition~\ref{cond-WeightReg}(ii), i.e., $G_n$ is strictly convex. Furthermore, using $\log\cosh x \leq |x|$
\be\label{eq-limG-nxtoinfty}
\lim_{|x|\to\infty} G_n(x) \geq \lim_{|x|\to\infty} \frac{x^2}{2} - \expec\left[\left| \sqrt{\frac{\beta}{\expec[W_n]}}W_n x+h \right|\right]=\infty.
\ee
Therefore, for $\beta<\beta_c$, $G_n(x)$ has a unique local minimum, which must be a global minimum. For $h=0$ clearly $x_n^*=0$ is a fixed point proving the first statement.

Now suppose that $h>0$. Define
$$
H_n(x) = \expec\left[\tanh\left(\sqrt{\frac{\beta}{\expec[W_n]}}W_n x + h\right)\sqrt{\frac{\beta}{\expec[W_n]}} W_n \right].
$$
Then $H_n$ is continuous and 
$$
H_n'(x)= \expec\left[\left(1-\tanh^2\left(\sqrt{\frac{\beta}{\expec[W_n]}}W_n x + h\right)\right)\frac{\beta}{\expec[W_n]} W_n^2 \right],
$$
and
$$
H_n''(x)=-2  \expec\left[\tanh\left(\sqrt{\frac{\beta}{\expec[W_n]}}W_n x + h\right)\left(1-\tanh^2\left(\sqrt{\frac{\beta}{\expec[W_n]}}W_n x + h\right)\right)\left(\frac{\beta}{\expec[W_n]}\right)^{3/2} W_n^3\right].
$$
Since $W_n$ is a positive random variable, we conclude that $H_n$ is concave for $x\geq0$. Since $H_n(0)>0$ and $H_n$ is bounded, there is a unique positive solution to $x=H_n(x)$, call this solution $x_n^*$. Since $G_n'(0)<0$ it follows from~\eqref{eq-limG-nxtoinfty} that $x_n^*$ is a local minimizer.
For any solution $x^-<0$ of~\eqref{eq-fixedpoint-xn},we have that
$$
G_n(x^-) > G_n(-x^-) \geq G_n(x_n^*),
$$
since $x_n^*$ is the unique positive local minimizer. Hence, $x_n^*$ is also the unique global minimizer.

\noindent
The proof for $h<0$ is similar.

\noindent
Since $x_n^*$ is the unique global minimizer, we must have that $G_n''(x) \geq 0$, so it only remains to show that this inequality is strict for $(\beta,h) \in \calU$. 
For $h\neq 0$, we know that
$$
\lim_{n\to\infty} \chi_n(\beta,h) = \chi(\beta,h) < \infty.
$$
Since we can rewrite~\eqref{eq-defchin} as
$$
\chi_n = 1- \expec\left[\tanh^2\left(\sqrt{\frac{\beta}{\expec[W_n]}}W_n x_n^* + h\right) \right]+\frac{\frac{\beta}{\expec[W_n]} \expec\left[\left(1-\tanh^2\left(\sqrt{\frac{\beta}{\expec[W_n]}}W_n x_n^* + h\right)\right) W_n \right]^2}{G_n''(x_n^*)}.
$$
it must hold that $G_n''(x_n^*)>0$ for $n$ large enough.
\end{proof}

We can now investigate the moment generating function of the normalized weighted spin sum.

\begin{proposition}\label{prop-cltweighted}
Suppose that Condition~\ref{cond-WeightReg}(i)--(ii) holds. Then,
$$
\lim_{n\to\infty} \expec\biggl[\exp\biggl\{s\Bigl(\sqrt{\frac{\beta}{\expec[W_n]}} \frac{1}{\sqrt{n}}\sum_{i\in[n]}w_i\sigma_i - \sqrt{n}x_n^*\Bigr)\biggr\}\biggr] = e^{c''(0) \frac{s^2}{2}}, 
$$
where
$$
c(s) = \lim_{n\to\infty} c_n(s),
$$
and $c_n$ is defined in Lemma \ref{lem-cmf-weigthedsum}. 
In particular,
$$
\sqrt{\frac{\beta}{\expec[W_n]}} \frac{1}{\sqrt{n}}\sum_{i\in[n]}w_i\sigma_i - \sqrt{n}x_n^* \stackrel{d}{\longrightarrow} \mathcal{N}\left(0,c''(0) \right), 
$$
and all moments of the l.h.s.~converge to that of this normal distribution.
\end{proposition}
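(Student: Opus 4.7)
The plan is to combine Lemma~\ref{lem-cmf-weigthedsum} with a Laplace expansion. Since the change of variables $y = x/\sqrt n + a$ eliminates the $a$-dependence in the integrals of Lemma~\ref{lem-cmf-weigthedsum}, we may rewrite
$$
c_n(t) = \frac{1}{n}\log \frac{\int_{\bbR} e^{-n G_n(y;t)}\dint y}{\int_{\bbR} e^{-n G_n(y)}\dint y},
$$
and the MGF of $S_n := \sqrt{\beta/\expec[W_n]}\,n^{-1/2}\sum_{i\in[n]} w_i\sigma_i - \sqrt n\,x_n^*$ equals $\exp\bigl(-\sqrt n\,s\,x_n^* + n c_n(s/\sqrt n)\bigr)$. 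The whole argument reduces to expanding $c_n$ to second order at $0$ uniformly in $n$.

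For the Laplace step, Lemma~\ref{lem-globalminGn} supplies the unique global minimizer $x_n^*$ of $G_n$ with $G_n''(x_n^*) > 0$. The implicit function theorem applied to $\partial_y G_n(y;t) = 0$ then produces, for $|t|$ small enough uniformly in $n$, a smooth branch $y_n^*(t)$ of local minimizers of $G_n(\cdot;t)$ near $x_n^*$, and the arguments of Lemma~\ref{lem-globalminGn} adapt to show $y_n^*(t)$ remains the unique global minimizer. Combined with the quadratic lower bound $G_n(y;t) \geq y^2/2 - C(|y|+|t|)$ (from $\log\cosh x \leq |x|$) and the positive separation from any competing local minima when $\beta>\beta_c$, $h\neq 0$, classical Laplace asymptotics give
$$
c_n(t) = G_n(x_n^*) - G_n(y_n^*(t);t) + O(1/n)
$$
uniformly for $t$ in a fixed neighborhood of $0$.

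Next I would Taylor expand $\varphi_n(t) := G_n(y_n^*(t);t)$ using the envelope identity $\varphi_n'(t) = \partial_t G_n(y;t)\big|_{y=y_n^*(t)}$. Direct computation gives $\partial_t G_n(y;t) = -\expec\bigl[\tanh(\sqrt{\beta/\expec[W_n]}W_n(y+t)+h)\sqrt{\beta/\expec[W_n]}W_n\bigr]$, which at $(y_n^*(0),0) = (x_n^*,0)$ reduces to $-x_n^*$ by~\eqref{eq-fixedpoint-xn}. Differentiating the fixed-point equation for $y_n^*(t)$ in $t$ and evaluating at $0$ yields $(y_n^*)'(0) = 1/G_n''(x_n^*) - 1$, and a second application of the envelope formula gives $\varphi_n''(0) = 1 - 1/G_n''(x_n^*)$. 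Substituting back and setting $t = s/\sqrt n$,
$$
n c_n(s/\sqrt n) - \sqrt n\,s\,x_n^* = \tfrac12\bigl(1/G_n''(x_n^*) - 1\bigr)s^2 + O(s^3/\sqrt n) + o(1).
$$
Condition~\ref{cond-WeightReg}(i)--(ii) combined with dominated convergence in~\eqref{eq-Gn2} gives $G_n''(x_n^*) \to G''(x^*)$, so the right-hand side tends to $\tfrac12 c''(0) s^2$ with $c''(0) = 1/G''(x^*) - 1$. Exponentiation yields the stated MGF limit, and convergence of MGFs on a whole neighborhood of $0$ gives both weak convergence and the uniform integrability needed for convergence of all polynomial moments.

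The principal obstacle is ensuring the Laplace expansion is uniform in $n$ in the regime $\beta>\beta_c$, $h\neq 0$, where $G_n(\cdot;t)$ may possess secondary local minima. This is handled by exploiting the strict global-minimum statement of Lemma~\ref{lem-globalminGn}, which provides a positive gap that makes competing contributions exponentially small; the remaining work is routine bookkeeping of Gaussian integrals, all controlled by Condition~\ref{cond-WeightReg}.
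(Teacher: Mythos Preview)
Your approach is correct and takes a genuinely different route from the paper's. The paper Taylor-expands $c_n$ directly to second order with Lagrange remainder, writing $nc_n(s/\sqrt n)-\sqrt n\,s\,x_n^*=\sqrt n(c_n'(0)-x_n^*)s+c_n''(s_n^*)\,s^2/2$ for some $s_n^*\in(0,s/\sqrt n)$, and then treats the two pieces separately: the second-derivative term is shown to converge to $c''(0)$ by invoking the monotonicity and convexity properties of Ising pressures, with the details outsourced to~\cite{GiaGibHofPri16,GiaGibHofPri15}; the centering term $\sqrt n(c_n'(0)-x_n^*)$ is shown to be $O(1/\sqrt n)$ by differentiating the integral representation of Lemma~\ref{lem-cmf-weigthedsum} under the integral sign and Taylor-expanding $G_n$ and $G_n'$ about $x_n^*$. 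Your Laplace--envelope computation is more self-contained and yields the explicit variance $c''(0)=1/G''(x^*)-1$, at the cost of the uniformity bookkeeping you flag; the paper's softer route sidesteps that bookkeeping but leans on external references for the crucial convergence $c_n''(s_n^*)\to c''(0)$.

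One point to tighten: the Laplace expansion you record as $c_n(t)=G_n(x_n^*)-G_n(y_n^*(t);t)+O(1/n)$ becomes $O(1)$ after multiplication by $n$, so on its own it cannot justify the final ``$+o(1)$''. You need to keep the next Laplace term, namely $\tfrac{1}{2n}\log\bigl(G_n''(x_n^*)/\partial_{yy}G_n(y_n^*(t);t)\bigr)$, which is $O(t)=O(1/\sqrt n)$ at $t=s/\sqrt n$; only with this refinement does the remainder genuinely vanish.
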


\begin{proof}
Note that, with $s_n = \frac{s}{\sqrt{n}}$,
\begin{align*}
\log \expec\biggl[\exp\biggl\{s\Bigl(\sqrt{\frac{\beta}{\expec[W_n]}}  \frac{1}{\sqrt{n}}\sum_{i\in[n]}w_i\sigma_i &- \sqrt{n}x_n^*\Bigr)\biggr\}\biggr] = n c_n(s_n) - n s_n x_n^*\nn\\
&=n\left(c_n(0) + (c_n'(0)- x_n^*)s_n + c_n''(s_n^*)\frac{s_n^2}{2}\right)\nn\\
&=nc_n(0) +\sqrt{n} (c_n'(0)- x_n^*)s + c_n''(s_n^*)\frac{s^2}{2},
\end{align*}
for some $s_n^* \in (0,s_n)$. Clearly $c_n(0)=0$.

As mentioned, the numerator of~\eqref{eq-cmf-weigthedsum} can be interpreted as the partition function of an Ising model, and hence $c_n(s)$ is the difference of two pressures. Hence, the convergence of $c_n(s)$ can be proved as in~\cite[Sec.~2.1]{GiaGibHofPri16}. Moreover, this means that the monotonicity and convexity properties of the Ising model can be used to show that
$$
\lim_{n\to\infty} c_n''(s_n) = c''(0),
$$ 
see~\cite[Sec.~2.3]{GiaGibHofPri15} for details.

It remains to show that $\sqrt{n} (c_n'(0)- x_n^*) = o(1)$. For this, we use Lemma~\ref{lem-cmf-weigthedsum} with $a=x_n^*$ and
$$
\frac{\dint}{\dint s} G_n(x;s) = G_n'(x+s)-(x+s),
$$
to obtain that
\begin{align*}
\sqrt{n} (c_n'(0)- x_n^*)  &= \sqrt{n}\frac{\int_{-\infty}^\infty \left(\frac{x}{\sqrt{n}}+x_n^*-G'_n(\frac{x}{\sqrt{n}}+x_n^*)\right)e^{-n G_n\left(\frac{x}{\sqrt{n}}+x_n^*\right)} \dint x}{\int_{-\infty}^\infty e^{-n G_n\left(\frac{x}{\sqrt{n}}+x_n^*\right)} \dint x} - \sqrt{n}x_n^* \nn\\
&=\frac{\int_{-\infty}^\infty \left(x-\sqrt{n}G'_n(\frac{x}{\sqrt{n}}+x_n^*)\right)e^{-n G_n\left(\frac{x}{\sqrt{n}}+x_n^*\right)} \dint x}{\int_{-\infty}^\infty e^{-n G_n\left(\frac{x}{\sqrt{n}}+x_n^*\right)} \dint x}.
\end{align*}
Taylor expanding $G'_n(\frac{x}{\sqrt{n}}+x_n^*)$ and $G_n(\frac{x}{\sqrt{n}}+x_n^*)$ around $x_n^*$ gives
\begin{align*}
\sqrt{n} (c_n'(0)- x_n^*)  &= \frac{\int_{-\infty}^\infty \left(x-\sqrt{n}G'_n(x_n^*) - G_n''(x_n^*)x + \mathcal{O}(1/\sqrt{n})\right)e^{-n G_n\left(x_n^*\right)-\sqrt{n}G_n'(x_n^*) x - G_n''(x_n^*)\frac{x^2}{2}+ \mathcal{O}(1/\sqrt{n})} \dint x}{\int_{-\infty}^\infty e^{-n G_n\left(x_n^*\right)-\sqrt{n}G_n'(x_n^*) x - G_n''(x_n^*)\frac{x^2}{2}+ \mathcal{O}(1/\sqrt{n})} \dint x} \nn\\
&=(1-G_n''(x_n^*))\frac{\int_{-\infty}^\infty xe^{ - G_n''(x_n^*)\frac{x^2}{2}+ \mathcal{O}(1/\sqrt{n})} \dint x}{\int_{-\infty}^\infty e^{- G_n''(x_n^*)\frac{x^2}{2}+ \mathcal{O}(1/\sqrt{n})} \dint x} +\mathcal{O}(1/\sqrt{n})=\mathcal{O}(1/\sqrt{n}),
\end{align*}
where we used that $G_n'(x_n^*)=0$ and that in the limit $n\to\infty$ the integral in the numerator equals $0$ since this is an integral over an odd function.
\end{proof}

In the above proposition, we use an explicit centering. Instead, we can also center with the expectation. In that case, we can prove a similar result also when the spins are weighted by different quantities as we show now.

\begin{lemma}\label{lem-clt-tweighted}
Suppose that $(t_i)_{i\in[n]}$ is a sequence satisfying Condition~\ref{cond-WeightReg}(i)--(ii) with $W_n$ replaced by $T_n:=t_I$, with $I\sim Uni[n]$. Let 
$$
\tilde{c}_n(s) = \frac1n \log \expec\biggl[\exp\biggl(s \sum_{i\in[n]} t_i\sigma_i\biggr)\biggr].
$$
Then,
$$
\lim_{n\to\infty} \expec\biggl[\exp\biggl\{s\Bigl(\frac{1}{\sqrt{n}}\sum_{i\in[n]}t_i(\sigma_i - \expec[\sigma_i])\Bigr)\biggr\}\biggr] = e^{\tilde{c}''(0) \frac{s^2}{2}},
$$
where
$$
\tilde{c}(s) = \lim_{n\to\infty} \tilde{c}_n(s).
$$
In particular,
$$
\frac{1}{\sqrt{n}}\sum_{i\in[n]}t_i(\sigma_i - \expec[\sigma_i]) \stackrel{d}{\longrightarrow} \mathcal{N}\left(0,\tilde{c}''(0)\right),
$$
and all moments of the l.h.s.\ converge to that of this normal distribution.
\end{lemma}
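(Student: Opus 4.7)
The plan is to mirror the proof of Proposition~\ref{prop-cltweighted}, noting that centering by the mean $\expec[\sigma_i]$ eliminates the $\mathcal{O}(1/\sqrt{n})$ correction encountered there. Writing $s_n = s/\sqrt{n}$, the starting identity is
$$
\log \expec\biggl[\exp\Bigl\{\frac{s}{\sqrt{n}}\sum_{i\in[n]}t_i(\sigma_i - \expec[\sigma_i])\Bigr\}\biggr] = n\tilde{c}_n(s_n) - s\sqrt{n}\,\tilde{c}_n'(0),
$$
together with the direct differentiation $\tilde{c}_n'(0) = \frac{1}{n}\sum_{i\in[n]} t_i \expec[\sigma_i]$. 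A second-order Taylor expansion of $n\tilde{c}_n(\cdot)$ at $0$ then produces exact cancellation of both the constant and the linear terms, reducing the left-hand side to $\frac{s^2}{2}\tilde{c}_n''(s_n^*)$ for some $s_n^* \in (0, s_n)$.

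The bulk of the work then consists of showing $\tilde{c}_n''(s_n^*) \to \tilde{c}''(0)$. For the pointwise convergence $\tilde{c}_n \to \tilde{c}$, I would observe that $n\tilde{c}_n(s)$ is the difference of the log partition function of an inhomogeneous Curie-Weiss model in which vertex $i$ carries external field $h + st_i$ and the log partition function of the original ICW. Applying the Hubbard-Stratonovich transform exactly as in Lemma~\ref{lem-cmf-weigthedsum} rewrites each as a one-dimensional integral of $\exp(-nG_n(x;\cdot))$ form, and Laplace's method around the unique global minimum (Lemma~\ref{lem-globalminGn}) delivers the limit under Condition~\ref{cond-WeightReg}(i)--(ii). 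This is precisely the pressure convergence carried out in~\cite[Sec.~2.1]{GiaGibHofPri16}.

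The main obstacle is upgrading pointwise convergence of the (convex) functions $\tilde{c}_n$ to convergence of the second derivatives along the sequence $s_n^* \to 0$. As invoked in Proposition~\ref{prop-cltweighted}, this exploits convexity of $\tilde{c}_n$ as a scaled log-moment generating function, combined with the monotonicity and convexity properties of the Ising pressure, and follows the argument of~\cite[Sec.~2.3]{GiaGibHofPri15}. Once this is in place, the log-MGF converges to $\frac{s^2}{2}\tilde{c}''(0)$, proving the first claim of the lemma.

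Finally, since each prelimit random variable has compact support (being a bounded linear combination of $\pm 1$ spins), its moment generating function exists on all of $\mathbb{R}$. MGF convergence in a neighborhood of $0$ therefore yields convergence in distribution to $\mathcal{N}(0, \tilde{c}''(0))$ via L\'evy's continuity theorem, as well as convergence of all moments to those of this normal by the uniform integrability of every integer power that follows from MGF convergence on an open interval containing the origin.
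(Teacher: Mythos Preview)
Your proposal is correct and follows essentially the same route as the paper: the same Taylor expansion of $n\tilde{c}_n$ at $0$, the same observation that $\tilde{c}_n'(0)=\frac1n\sum_i t_i\expec[\sigma_i]$ kills the linear term exactly, and the same appeal to the pressure-convergence/convexity argument of \cite{GiaGibHofPri16,GiaGibHofPri15} for $\tilde{c}_n''(s_n^*)\to\tilde{c}''(0)$. The only difference is that you spell out a few more details (Hubbard--Stratonovich, why MGF convergence gives moment convergence) than the paper, which simply says ``can be shown as above.''
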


\begin{proof}
We proceed as in the previous proposition. We write,with $s_n = \frac{s}{\sqrt{n}}$,
$$
\log \expec\biggl[\exp\biggl\{s\Bigl(\frac{1}{\sqrt{n}}\sum_{i\in[n]}t_i(\sigma_i - \expec[\sigma_i])\Bigr)\biggr\}\biggr] =n\tilde{c}_n(0) +\sqrt{n} \biggl(\tilde{c}_n'(0)- \frac{1}{n}\sum_{i\in[n]}t_i \expec[\sigma_i]\biggr)s + \tilde{c}_n''(s_n^*)\frac{s^2}{2},
$$
for some $s_n^* \in (0,s_n)$. Again, $\tilde{c}_n(0)=0$. Since $\tilde{c}_n(s)$ is a cumulant generating function, 
$$
\tilde{c}'_n(0) = \frac1n \expec\biggl[\sum_{i\in[n]}t_i \sigma_i\biggr],
$$
so that $\tilde{c}_n'(0)- \frac{1}{n}\sum_{i\in[n]}t_i \expec[\sigma_i]=0$. That $\lim_{n\to\infty}\tilde{c}_n''(s_n^*)=\tilde{c}''(0)$ can be shown as above.
\end{proof}

\subsection{Bounds on error terms}\label{sec-errorterms}
We are working in the setting of Lemma \ref{lem-regression}.  Recall that $X=(X_n, \tilde{X}_n)^t$, $X' = (X_n', \tilde{X}_n')^t$ and $\lambda = \frac 1n$.
Note that the inverse of the matrix $\Lambda$ in~\eqref{eq-regressionICW} is given by
$$
\Lambda^{-1} = \begin{pmatrix}1 & c \, \sigma^2(x_n^*,\beta,h) \\ 0 & \sigma^2(x_n^*,\beta,h) \end{pmatrix},
$$
so that with the notations of Theorem \ref{thm-MarginalStein} we obtain
$$
\ell D D_1 = (X_n-X'_n)^2 +c \, \sigma^2(x_n^2,\beta,h) \,  (X_n-X'_n)(\tilde{X}_n-\tilde{X}'_n),
$$
To prove our main result, we apply Theorem~\ref{thm-MarginalStein}. We first bound the first term of~\eqref{eq-thmStein}:

\begin{lemma}\label{lem-bound1sttermerrors}
We have the following bound:
\begin{eqnarray*}
\expec\left[\left|1-\frac{1}{2\lambda} \expec\left[(X_n-X'_n)^2 +c\, \sigma^2(x_n^*,\beta,h) \, (X_n-X'_n)(\tilde{X}_n-\tilde{X}'_n) \,\big|\, \mathcal{F}_n\right] \right|\right] 
& & \\ \leq \expec[|R_3+R_4+R_5+\hat{R}_3+\hat{R}_4+\hat{R}_5|],
\end{eqnarray*}
where
\begin{align*}
R_3&=\frac{1}{\chi_n}\frac{1}{n} \sum_{i\in[n]}\sigma_i \left(\tanh\left(\sqrt{\frac{\beta}{\expec[W_n]}} w_i x_n^*+h\right)-\tanh\left(\frac{\beta w_i}{\expec[W_n]} \tilde{m}_n^i+h\right)\right), \\
R_4&=\frac{1}{\chi_n}\frac{1}{n} \sum_{i\in[n]}\tanh\left(\sqrt{\frac{\beta}{\expec[W_n]}} w_i x_n^*+h\right) \left(\tanh\left(\sqrt{\frac{\beta}{\expec[W_n]}} w_i x_n^*+h\right)-\expec[\sigma_i] \right), \\
R_5&=\frac{1}{\chi_n}\frac{1}{n} \sum_{i\in[n]}\tanh\left(\sqrt{\frac{\beta}{\expec[W_n]}} w_i x_n^*+h\right) \left(\expec[\sigma_i]-\sigma_i \right),\\
\hat{R}_3&=\frac{c\, \sigma^2(x_n^*,\beta,h)}{\sqrt{\chi_n\tilde{\chi}_n}}\frac{1}{n}\sum_{i\in[n]}w_i\sigma_i \left(\tanh\left(\sqrt{\frac{\beta}{\expec[W_n]}} w_i x_n^*+h\right)-\tanh\left(\frac{\beta w_i}{\expec[W_n]} \tilde{m}_n^i+h\right)\right),\\
\hat{R}_4&=\frac{c\, \sigma^2(x_n^*,\beta,h)}{\sqrt{\chi_n\tilde{\chi}_n}}\frac{1}{n} \sum_{i\in[n]}w_i\tanh\left(\sqrt{\frac{\beta}{\expec[W_n]}} w_i x_n^*+h\right) \left(\tanh\left(\sqrt{\frac{\beta}{\expec[W_n]}} w_i x_n^*+h\right)-\expec[\sigma_i] \right),\\
\hat{R}_5&=\frac{c\, \sigma^2(x_n^*,\beta,h)}{\sqrt{\chi_n\tilde{\chi}_n}}\frac{1}{n} \sum_{i\in[n]}w_i\tanh\left(\sqrt{\frac{\beta}{\expec[W_n]}} w_i x_n^*+h\right) \left(\expec[\sigma_i]-\sigma_i \right).
\end{align*}
\end{lemma}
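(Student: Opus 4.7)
The plan is to compute the two conditional expectations explicitly, absorb the random parts of the summands into the six error terms $R_3,R_4,R_5,\hat R_3,\hat R_4,\hat R_5$ via a telescoping decomposition, and then check that the remaining deterministic contribution exactly equals the leading $1$ and therefore cancels it. After this cancellation the quantity inside the absolute value becomes $-(R_3+R_4+R_5+\hat R_3+\hat R_4+\hat R_5)$ pointwise, and the triangle inequality yields the claim.

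Concretely, I would first use $X_n-X_n' = (\sigma_I-\sigma_I')/\sqrt{n\chi_n}$ and $\tilde X_n-\tilde X_n' = w_I(\sigma_I-\sigma_I')/\sqrt{n\tilde\chi_n}$, the identity $(\sigma_i-\sigma_i')^2=2(1-\sigma_i\sigma_i')$, \eqref{eq-sigmaprimegivenF}, and $\lambda=1/n$, to obtain
\begin{align*}
\frac{1}{2\lambda}\expec\bigl[(X_n-X_n')^2\mid\calF_n\bigr]
&= \frac{1}{\chi_n}\cdot\frac{1}{n}\sum_{i\in[n]}\Bigl(1-\sigma_i\tanh\Bigl(\tfrac{\beta w_i}{\expec[W_n]}\tilde m_n^i+h\Bigr)\Bigr),\\
\frac{c\sigma^2}{2\lambda}\expec\bigl[(X_n-X_n')(\tilde X_n-\tilde X_n')\mid\calF_n\bigr]
&= \frac{c\sigma^2}{\sqrt{\chi_n\tilde\chi_n}}\cdot\frac{1}{n}\sum_{i\in[n]}w_i\Bigl(1-\sigma_i\tanh\Bigl(\tfrac{\beta w_i}{\expec[W_n]}\tilde m_n^i+h\Bigr)\Bigr).
\end{align*}

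Next, a telescoping step replaces $\tanh(\tfrac{\beta w_i}{\expec[W_n]}\tilde m_n^i+h)$ by $\tanh(\sqrt{\tfrac{\beta}{\expec[W_n]}}w_i x_n^*+h)$ and puts the remainder into $R_3$ for the first line (respectively $\hat R_3$ for the second). A second step then rewrites
$$
1-\sigma_i\tanh\Bigl(\sqrt{\tfrac{\beta}{\expec[W_n]}}w_i x_n^*+h\Bigr)
= \bigl(1-\tanh^2(\cdot)\bigr)+\tanh(\cdot)\bigl(\tanh(\cdot)-\expec[\sigma_i]\bigr)+\tanh(\cdot)\bigl(\expec[\sigma_i]-\sigma_i\bigr),
$$
which produces $R_4,R_5$ (respectively $\hat R_4,\hat R_5$). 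After these two decompositions, the only pieces of $\frac{1}{2\lambda}\expec[\ell DD_1\mid\calF_n]$ that are not already members of $\{R_3,R_4,R_5,\hat R_3,\hat R_4,\hat R_5\}$ are the deterministic averages $\tfrac{K_1}{\chi_n}$ and $\tfrac{c\sigma^2 K_2}{\sqrt{\chi_n\tilde\chi_n}}$, where $K_j := \expec\bigl[(1-\tanh^2(\sqrt{\beta/\expec[W_n]}W_n x_n^*+h))W_n^{\,j-1}\bigr]$ for $j=1,2,3$.

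It then remains to verify the identity $\tfrac{K_1}{\chi_n}+\tfrac{c\sigma^2 K_2}{\sqrt{\chi_n\tilde\chi_n}}=1$. From \eqref{eq-deflambdasigma}, \eqref{eq-defc} and \eqref{eq-choicetildechi} one computes $\sigma^2=(1-\tfrac{\beta}{\expec[W_n]}K_3)^{-1}$, $\tilde\chi_n=\sigma^2 K_3$, and hence $c\sigma^2/\sqrt{\chi_n\tilde\chi_n}=\sigma^2(\beta/\expec[W_n])K_2/\chi_n$, so the leftover equals $(K_1+\sigma^2(\beta/\expec[W_n])K_2^2)/\chi_n$. Comparing with the explicit formula \eqref{eq-defchin} for $\chi_n$ shows the numerator is exactly $\chi_n$, proving the identity. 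The main obstacle in the proof is precisely this algebraic cancellation: it is the reason $\tilde\chi_n$ was defined as in \eqref{eq-choicetildechi}, since any other normalization for the weighted spin sum would prevent the deterministic part from collapsing to $1$. Everything else is bookkeeping, and the remaining analytic work is deferred to the subsequent lemmas that bound $\expec[|R_3+\cdots+\hat R_5|]$.
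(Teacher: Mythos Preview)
Your proof is correct and follows essentially the same route as the paper: compute the conditional second moments via $(\sigma_I-\sigma_I')^2=2(1-\sigma_I\sigma_I')$ and \eqref{eq-sigmaprimegivenF}, telescope $\sigma_i\tanh(\cdot)$ into the deterministic piece plus $R_3,R_4,R_5$ (respectively $\hat R_3,\hat R_4,\hat R_5$), and verify that the deterministic remainder equals $1$ using \eqref{eq-defchin}, \eqref{eq-defc}, \eqref{eq-deflambdasigma} and \eqref{eq-Gn2}. One small remark: the cancellation here is actually insensitive to the choice of $\tilde\chi_n$, since $c/\sqrt{\tilde\chi_n}$ does not depend on $\tilde\chi_n$; the specific normalization \eqref{eq-choicetildechi} is needed for the analogous lemma for the \emph{weighted} spin sum, not for this one.
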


\begin{proof}
Note that
$$
(X_n-X_n')^2 = \frac{1}{n\chi_n} (\sigma_I-\sigma_I')^2 = \frac{2}{n\chi_n}(1-\sigma_I\sigma'_I).
$$
Hence, also using~\eqref{eq-sigmaprimegivenF},
\begin{align*}
\frac{1}{2\lambda}&\expec[(X_n-X_n')^2  \,|\, \calF_n] =\frac{1}{\chi_n}\biggl(1-\frac{1}{n} \sum_{i\in[n]}\sigma_i \expec[\sigma'_i \given \calF_n]\biggr) \nn\\
&=\frac{1}{\chi_n}\biggl(1-\frac{1}{n} \sum_{i\in[n]}\sigma_i \tanh\left(\frac{\beta w_i}{\expec[W_n]} \tilde{m}_n^i+h\right)\biggr) \nn\\
&=\frac{1}{\chi_n}\biggl(1-\expec\left[\tanh^2\left(\sqrt{\frac{\beta}{\expec[W_n]}} W_n x_n^*+h\right)\right]\biggr) \nn\\
& \qquad + \frac{1}{\chi_n}\frac{1}{n} \sum_{i\in[n]}\sigma_i \left(\tanh\left(\sqrt{\frac{\beta}{\expec[W_n]}} w_i x_n^*+h\right)-\tanh\left(\frac{\beta w_i}{\expec[W_n]} \tilde{m}_n^i+h\right)\right) \nn\\
&\qquad + \frac{1}{\chi_n}\frac{1}{n} \sum_{i\in[n]}\tanh\left(\sqrt{\frac{\beta}{\expec[W_n]}} w_i x_n^*+h\right) \left(\tanh\left(\sqrt{\frac{\beta}{\expec[W_n]}} w_i x_n^*+h\right)-\expec[\sigma_i] \right)\nn\\
& \qquad + \frac{1}{\chi_n}\frac{1}{n} \sum_{i\in[n]}\tanh\left(\sqrt{\frac{\beta}{\expec[W_n]}} w_i x_n^*+h\right) \left(\expec[\sigma_i]-\sigma_i \right) \nn\\
&=\frac{1}{\chi_n}\biggl(1-\expec\left[\tanh^2\left(\sqrt{\frac{\beta}{\expec[W_n]}} W_n x_n^*+h\right)\right]\biggr) +R_3+R_4+R_5.
\end{align*}
Since
$$
(X_n-X_n')(\tilde{X}_n-\tilde{X}_n') = \frac{w_I}{n\sqrt{\chi_n\tilde{\chi}_n}} (\sigma_I-\sigma_I')^2 = \frac{2 w_I}{n\sqrt{\chi_n\tilde{\chi}_n}}(1-\sigma_I\sigma'_I),
$$
it can be shown in a similar way, by incorporating the extra factor $w_I$, that
\begin{align*}
\frac{c \, \sigma^2(x_n^*,\beta,h)}{2\lambda} &\expec\left[(X_n-X'_n)(\tilde{X}_n-\tilde{X}'_n) \,\big|\, \mathcal{F}_n\right] \nn\\
&= \frac{c \, \sigma^2(x_n^*,\beta,h)}{\sqrt{\chi_n\tilde{\chi}_n}}\expec\left[ \left(1-\tanh^2\left(\sqrt{\frac{\beta}{\expec[W_n]}} W_n x_n^*+h\right)\right) W_n \right] +\hat{R}_3+\hat{R}_4+\hat{R}_5.
\end{align*}
The lemma follows by observing that
\begin{align*}
\frac{1}{\chi_n}&\biggl(1-\expec\left[\tanh^2\left(\sqrt{\frac{\beta}{\expec[W_n]}} W_n x_n^*+h\right)\right]\biggr)+\frac{c \, \sigma^2(x_n^*,\beta,h)}{\sqrt{\chi_n\tilde{\chi}_n}}\expec\left[ \left(1-\tanh^2\left(\sqrt{\frac{\beta}{\expec[W_n]}} W_n x_n^*+h\right)\right) W_n \right]\nn\\
&=\frac{1}{\chi_n}\biggl(1-\expec\left[\tanh^2\left(\sqrt{\frac{\beta}{\expec[W_n]}} W_n x_n^*+h\right)\right] + \frac{\frac{\beta}{\expec[W_n]} \expec\left[ \left(1-\tanh^2\left(\sqrt{\frac{\beta}{\expec[W_n]}} W_n x_n^*+h\right)\right) W_n \right]^2}{G_n''(x_n^*)}\biggr)\nn\\
&=1,
\end{align*}
which follows from~\eqref{eq-Gn2} and~\eqref{eq-defchin}.
\end{proof}

We bound the second term of~\eqref{eq-thmStein} in a similar way:

\begin{lemma}\label{lem-bound2ndtermerrors}
$$
\frac{1}{\lambda}\expec\left[\left|\expec\left[\left|(X_n-X'_n) +c \, \sigma^2(x_n^*,\beta,h) \, (\tilde{X}_n-\tilde{X}'_n)\right|(X_n-X'_n) \,\big|\, \mathcal{F}_n\right] \right|\right] \leq 2\expec[|\bar{R}_3+\bar{R}_4+\bar{R}_5+\check{R}_3+\check{R}_4+\check{R}_5|],
$$
where
\begin{align*}
\bar{R}_3&=\frac{1}{\chi_n}\frac{1}{n} \sum_{i\in[n]} \left(\tanh\left(\sqrt{\frac{\beta}{\expec[W_n]}} w_i x_n^*+h\right)-\tanh\left(\frac{\beta w_i}{\expec[W_n]} \tilde{m}_n^i+h\right)\right), \\
\bar{R}_4&=\frac{1}{\chi_n}\frac{1}{n} \sum_{i\in[n]} \left(\expec[\sigma_i]-\tanh\left(\sqrt{\frac{\beta}{\expec[W_n]}} w_i x_n^*+h\right) \right), \\
\bar{R}_5&=\frac{1}{\chi_n}\frac{1}{n} \sum_{i\in[n]} \left(\sigma_i -\expec[\sigma_i]\right),\\
\check{R}_3&=\frac{c\, \sigma^2(x_n^*,\beta,h)}{\sqrt{\chi_n\tilde{\chi}_n}}\frac{1}{n}\sum_{i\in[n]}w_i\left(\tanh\left(\sqrt{\frac{\beta}{\expec[W_n]}} w_i x_n^*+h\right)-\tanh\left(\frac{\beta w_i}{\expec[W_n]} \tilde{m}_n^i+h\right)\right),\\
\check{R}_4&=\frac{c\, \sigma^2(x_n^*,\beta,h)}{\sqrt{\chi_n\tilde{\chi}_n}}\frac{1}{n} \sum_{i\in[n]}w_i\left(\expec[\sigma_i]-\tanh\left(\sqrt{\frac{\beta}{\expec[W_n]}} w_i x_n^*+h\right) \right),\\
\check{R}_5&=\frac{c\, \sigma^2(x_n^*,\beta,h)}{\sqrt{\chi_n\tilde{\chi}_n}}\frac{1}{n} \sum_{i\in[n]}w_i\left(\sigma_i -\expec[\sigma_i]\right).
\end{align*}
\end{lemma}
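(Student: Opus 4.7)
The argument parallels the proof of Lemma~\ref{lem-bound1sttermerrors}, with one key additional observation. Let $D_1:=X_n-X_n'=(\sigma_I-\sigma'_I)/(\sqrt n\sqrt{\chi_n})$ and $D_2:=\tilde X_n-\tilde X_n'=w_I(\sigma_I-\sigma'_I)/(\sqrt n\sqrt{\tilde\chi_n})$; both increments are proportional to $\sigma_I-\sigma'_I$, so
\[
D_1+c\sigma^2 D_2 \;=\; \frac{\sigma_I-\sigma'_I}{\sqrt n}\left(\frac{1}{\sqrt{\chi_n}}+\frac{c\sigma^2 w_I}{\sqrt{\tilde\chi_n}}\right).
\]
I would first verify that the bracketed factor is nonnegative: $\sigma^2=1/G_n''(x_n^*)>0$ by Lemma~\ref{lem-globalminGn}, $w_I>0$ by assumption, and $c\geq 0$ because the expectation appearing in~\eqref{eq-defc} is nonnegative (its integrand involves only $1-\tanh^2\geq 0$ and $W_n\geq 0$). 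Combined with the elementary identity $|\sigma_I-\sigma'_I|(\sigma_I-\sigma'_I)=2(\sigma_I-\sigma'_I)$, valid since $\sigma_I-\sigma'_I\in\{-2,0,2\}$, this gives
\[
|D_1+c\sigma^2 D_2|\,D_1 \;=\; \frac{2(\sigma_I-\sigma'_I)}{n}\left(\frac{1}{\chi_n}+\frac{c\sigma^2 w_I}{\sqrt{\chi_n\tilde\chi_n}}\right).
\]

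Next, I would take $\expec[\,\cdot\,|\,\calF_n]$ by averaging over the uniform choice of $I\in[n]$ and substituting~\eqref{eq-sigmaprimegivenF} for $\expec[\sigma'_i\,|\,\calF_n]$, and then multiply by $1/\lambda=n$. Abbreviating $T_i:=\tanh(\tfrac{\beta w_i}{\expec[W_n]}\tilde m_n^i+h)$ and $S_i:=\tanh(\sqrt{\tfrac{\beta}{\expec[W_n]}}\,w_i x_n^*+h)$, the resulting expression is
\[
\frac{2}{\chi_n}\,\frac1n\sum_{i\in[n]}(\sigma_i-T_i) \;+\; \frac{2c\sigma^2}{\sqrt{\chi_n\tilde\chi_n}}\,\frac1n\sum_{i\in[n]}w_i(\sigma_i-T_i).
\]
Decomposing each summand via the telescope $\sigma_i-T_i=(S_i-T_i)+(\expec[\sigma_i]-S_i)+(\sigma_i-\expec[\sigma_i])$, the first sum regroups as $\bar R_3+\bar R_4+\bar R_5$ and the weighted sum as $\check R_3+\check R_4+\check R_5$. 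Taking the outer $\expec[|\cdot|]$ then yields exactly $2\,\expec[|\bar R_3+\bar R_4+\bar R_5+\check R_3+\check R_4+\check R_5|]$, establishing the lemma (with equality in place of inequality).

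The only delicate point is the nonnegativity of $\tfrac{1}{\sqrt{\chi_n}}+\tfrac{c\sigma^2 w_I}{\sqrt{\tilde\chi_n}}$; if one instead distributed the absolute value by the triangle inequality, one would obtain $|c|\sigma^2$ in place of $c\sigma^2$ and the resulting error terms would no longer match the $\check R$'s verbatim. Beyond this sign check, the proof is a mechanical recomputation following the template of Lemma~\ref{lem-bound1sttermerrors}.
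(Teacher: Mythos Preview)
Your proposal is correct and follows essentially the same route as the paper: both factor out $(\sigma_I-\sigma'_I)$ from $D_1+c\sigma^2 D_2$, use that $|\sigma_I-\sigma'_I|(\sigma_I-\sigma'_I)=2(\sigma_I-\sigma'_I)$, then condition on $\calF_n$, substitute~\eqref{eq-sigmaprimegivenF}, and telescope $\sigma_i-T_i$ into the three pieces. You are in fact slightly more careful than the paper in explicitly justifying the nonnegativity of $\tfrac{1}{\sqrt{\chi_n}}+\tfrac{c\sigma^2 w_I}{\sqrt{\tilde\chi_n}}$ (the paper drops the absolute value on this bracket without comment), and you correctly observe that the stated inequality is actually an equality.
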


\begin{proof}
We have that
\begin{align*}
|\ell D| D_1 &= |(X_n-X'_n) +c\, \sigma^2(x_n^*,\beta,h) \, (\tilde{X}_n-\tilde{X}'_n)|(X_n-X'_n)\nn\\
&=\frac1n\left[\frac{1}{\chi_n}+\frac{c \, \sigma^2(x_n^*,\beta,h)}{\sqrt{\chi_n \tilde\chi_n}}w_I\right] | \sigma_I-\sigma'_I | (\sigma_I-\sigma'_I) \nn\\
& = \frac2n\left[\frac{1}{\chi_n}+\frac{c\, \sigma^2(x_n^*,\beta,h)}{\sqrt{\chi_n \tilde\chi_n}}w_I\right] (\sigma_I-\sigma'_I),
\end{align*}
where the last equality follows, since $| \sigma_I-\sigma'_I | $ can only take values $2$ or $0$.

\noindent
Hence with $\tilde{m}_n^i$ given by \eqref{mni} we obtain

\begin{align*}
\frac{1}{\lambda} \expec\left[ |\ell D| D_1 \given \calF_n\right] &=  \frac{2}{n} \sum_{i=1}^n \left[\frac{1}{\chi_n}+\frac{c \, \sigma^2(x_n^*,\beta,h)}{\sqrt{\chi_n \tilde\chi_n}}w_i\right] \left[\sigma_i - \tanh\left(\frac{\beta w_i}{\expec[W_n]} \tilde{m}_n^i+h\right)\right] \\
& \hspace{-4cm} = \frac{2}{n} \sum_{i=1}^n \left[\frac{1}{\chi_n}+\frac{c\, \sigma^2(x_n^*,\beta,h)}{\sqrt{\chi_n \tilde\chi_n}}w_i\right] \biggl[\left( \tanh\left(\sqrt{\frac{\beta}{\expec[W_n]}} w_i x_n^*+h\right)- \tanh \left( \frac{\beta w_i}{\expec[W_n]} \tilde{m}_n^i+h \right) \right) \\
& \qquad + \left( \expec[\sigma_i]- \tanh \left( \sqrt{\frac{\beta}{\expec[W_n]}} w_i x_n^*+h \right) \right) + \left(\sigma_i-\expec[\sigma_i]\right)\biggr].
\end{align*}
Expanding out both square brackets gives the six error terms of the lemma.
\end{proof}

The error terms can be bounded as follows.

\begin{lemma}\label{lem-errorterms}
\begin{align*}
\expec[|R_1|]&\leq  \frac{\beta}{\sqrt{\chi_n}}\frac{\expec[W_n^2]}{\expec[W_n]} \frac{1}{\sqrt{n}}, \\
\expec[|R_2|]&\leq\frac{\tilde{\chi}_n}{\sqrt{\chi_n}} \left(\frac{\beta}{\expec[W_n]}\right)^2 \expec[W_n^2] \expec[\tilde{X}_n^2] \frac{1}{\sqrt{n}}, \\
\expec[|R_3|],\expec[|\bar{R}_3|],\expec[|R_4|],\expec[|\bar{R}_4|]&\leq \frac{\beta \sqrt{\tilde{\chi}_n}}{\chi_n} \expec[|\tilde{X}_n|] \frac{1}{\sqrt{n}} + \frac{\beta}{\chi_n}\frac{\expec[W_n^2]}{\expec[W_n]} \frac{1}{n}, \\
\expec[|\tilde{R}_1|] &\leq   \frac{\beta}{\sqrt{\tilde{\chi}_n}} \frac{\expec[W_n^3]}{\expec[W_n]} \frac{1}{\sqrt{n}}, \\
\expec[|\tilde{R}_2|] &\leq 2\sqrt{\tilde{\chi}_n}\left(\frac{\beta}{\expec[W_n]}\right)^2 \expec[W_n^3]\expec[\tilde{X}_n^2] \frac{1}{\sqrt{n}}, \\
\expec[|\hat{R}_3|],\expec[|\check{R}_3|],\expec[|\hat{R}_4|],\expec[|\check{R}_4|]&\leq\frac{\beta c \, \sigma^2(x_n^*,\beta,h)}{\sqrt{\chi_n}} \frac{\expec[W_n^2]}{\expec[W_n]} \expec[|\tilde{X}_n|] \frac{1}{\sqrt{n}}+\frac{\beta c \, \sigma^2(x_n^*,\beta,h)}{\sqrt{\chi_n\tilde{\chi}_n}}\frac{\expec[W_n^3]}{\expec[W_n]} \frac{1}{n}. 
\end{align*}
\end{lemma}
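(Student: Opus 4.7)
The proof handles each of the twelve error terms separately by reducing to three types of estimate: a first-order Lipschitz bound via $|\tanh'|\leq 1$, a second-order Taylor remainder via a uniform bound on $|\tanh''|$ (or $|G_n'''|$), and weighted sums whose averages match $\expec[W_n^2]$ or $\expec[W_n^3]$ by Condition~\ref{cond-WeightReg}. The two recurring identities that generate the $1/\sqrt{n}$ scaling are the leave-one-out difference $\tilde{m}_n-\tilde{m}_n^i = w_i\sigma_i/n$ and the rescaling $\tilde{m}_n - \tilde{M}_n = \sqrt{\tilde{\chi}_n/n}\,\tilde{X}_n$ following from~\eqref{eq-deftildeXn}. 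Expectations in $\tilde{X}_n$ are pulled outside at the very end, after all sample-path bounds are in place.

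First I would dispatch the purely leave-one-out terms $R_1$ and $\tilde{R}_1$. Applying $|\tanh'|\leq 1$ to the argument shift $\frac{\beta w_i}{\expec[W_n]}(\tilde{m}_n-\tilde{m}_n^i)$, each summand is bounded by $\frac{\beta w_i^2}{n\expec[W_n]}$; averaging over $i$ produces $\expec[W_n^2]$ in $R_1$ and, because $\tilde{R}_1$ carries an additional $w_i$ prefactor, $\expec[W_n^3]$ in $\tilde{R}_1$. The remaining $\sqrt{n}$ prefactor converts $1/n$ into the stated $1/\sqrt{n}$ rate.

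I would then handle the eight mixed terms $R_3,\bar{R}_3,R_4,\bar{R}_4,\hat{R}_3,\check{R}_3,\hat{R}_4,\check{R}_4$ in one sweep. In each, the key quantity is $\tanh(\sqrt{\beta/\expec[W_n]}w_ix_n^*+h) - \tanh(\beta w_i/\expec[W_n]\tilde{m}_n^i+h)$. Adding and subtracting $\tanh(\beta w_i/\expec[W_n]\tilde{m}_n+h)$ and applying $|\tanh'|\leq 1$ splits this into a \emph{rescaling} contribution of order $\frac{\beta w_i}{\expec[W_n]}\sqrt{\tilde{\chi}_n/n}\,|\tilde{X}_n|$ and a \emph{leave-one-out} contribution of order $\frac{\beta w_i^2}{n\expec[W_n]}$. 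For the $R_4$-type terms, I first use \eqref{eq-sigmaprimegivenF} to write $\expec[\sigma_i] = \expec[\tanh(\beta w_i/\expec[W_n]\tilde{m}_n^i+h)]$ and then apply Jensen's inequality to reduce to the same splitting. The trivial prefactor bounds $|\sigma_i|,|\tanh|\leq 1$ and the appropriate additional factors of $w_i$ (in the hat/check variants) account for why $\expec[W_n^2]$, respectively $\expec[W_n^3]$, appears in the two families.

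The most delicate step is the Taylor-remainder analysis for $R_2$ and $\tilde{R}_2$, which I expect to be the main obstacle. For $R_2$, the construction in~\eqref{eq-TaylorR2} has by design already subtracted the first-order Taylor term of $\tanh$, so $R_2$ is exactly the second-order remainder; using $|\tanh''|\leq 1$ together with the squared argument shift $\bigl(\frac{\beta w_i}{\expec[W_n]}\bigr)^2\frac{\tilde{\chi}_n}{n}\tilde{X}_n^2$ and $\frac{1}{n}\sum_i w_i^2 = \expec[W_n^2]$ produces the claimed bound after taking expectations. For $\tilde{R}_2$, an analogous Taylor expansion of $G_n'$ around $x_n^*$ uses $G_n'(x_n^*)=0$ and $G_n''(x_n^*)=1/\sigma^2$ so that only the second-order remainder survives. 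Differentiating~\eqref{eq-der-Gn} once more gives $G_n'''(x) = 2(\beta/\expec[W_n])^{3/2}\expec[W_n^3\,\tanh(\cdot)(1-\tanh^2(\cdot))]$; bounding $|\tanh(\cdot)(1-\tanh^2(\cdot))|\leq 1$ then yields $|G_n'''|\leq 2(\beta/\expec[W_n])^{3/2}\expec[W_n^3]$, which combined with the squared argument shift produces the $\sqrt{\tilde{\chi}_n}(\beta/\expec[W_n])^2\expec[W_n^3]\expec[\tilde{X}_n^2]/\sqrt{n}$ estimate. Correctly tracking the cubic $W_n^3$ weighting emerging from $G_n'''$ is where the bookkeeping is most delicate.
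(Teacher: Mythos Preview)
Your proposal is correct and follows essentially the same route as the paper: the Lipschitz bound on $\tanh$ together with $\tilde m_n-\tilde m_n^i=w_i\sigma_i/n$ handles $R_1,\tilde R_1$ and the leave-one-out halves of the $R_3$/$R_4$-type terms; the identity $\expec[\sigma_i]=\expec[\tanh(\beta w_i\tilde m_n^i/\expec[W_n]+h)]$ reduces the $R_4$-type terms to the $R_3$-type ones; and second-order Taylor remainders for $\tanh$ (respectively $G_n'$, via the explicit formula for $G_n'''$ and the bound $|G_n'''|\le 2(\beta/\expec[W_n])^{3/2}\expec[W_n^3]$) dispose of $R_2$ and $\tilde R_2$. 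The only cosmetic difference is that the paper goes directly from $\tilde m_n^i$ to $\tilde M_n$ via $|\tilde M_n-\tilde m_n+w_i\sigma_i/n|$ rather than inserting the intermediate point $\tilde m_n$, which yields the identical two-term split you describe.
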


\begin{proof}
Since $\tanh$ is $1$-Lipschitz, $\tilde{m}_n-\tilde{m}_n^i = w_i\sigma_i/n$ and $|\sigma_i|=1$,
$$
\left|\tanh\left(\frac{\beta w_i}{\expec[W_n]} \tilde{m}_n+h\right) - \tanh\left(\frac{\beta w_i}{\expec[W_n]} \tilde{m}_n^i+h\right)\right| \leq \frac{\beta w_i^2}{n\expec[W_n]}.
$$
From this, the bounds on $R_1$ and $\tilde{R}_1$  follow. Using that $\tanh$ is $1$-Lipschitz, it also follows with \eqref{eq-deftildeXn} that
\begin{align*}
\left|\tanh\left(\sqrt{\frac{\beta}{\expec[W_n]}} w_i x_n^*+h\right)-\tanh\left(\frac{\beta w_i}{\expec[W_n]} \tilde{m}_n^i+h\right)\right| & \leq w_i \left|\sqrt{\frac{\beta}{\expec[W_n]}}  x_n^*-\frac{\beta}{\expec[W_n]} \tilde{m}_n^i\right| \nn\\
= w_i \frac{\beta}{\expec[W_n]} \left|\tilde{M}_n- \tilde{m}_n+w_i\frac{\sigma_i}{n}\right|& \leq w_i \frac{\beta \sqrt{\tilde{\chi}_n}}{\expec[W_n]} |\tilde{X}_n| \frac{1}{\sqrt{n}}+\frac{\beta w_i^2}{n\expec[W_n]}.
\end{align*}
From this, the bounds on $R_3, \bar{R}_3, \hat{R}_3$ and $\check{R}_3$ follow. Observe that, by~\eqref{eq-sigmaprimegivenF},
$$
\expec[\sigma_i] = \expec\left[\expec[\sigma_i \,|\, \calF_n^i]\right] = \expec\left[\tanh\left(\frac{\beta w_i}{\expec[W_n]} \tilde{m}_n^i+h\right)\right],
$$
and $|\tanh(x)| \leq 1$, so that the bounds for $R_3$ and $\hat{R}_3$ also hold for $R_4,\bar{R}_4$ and $\hat{R}_4,\check{R}_4$, respectively.

To bound $R_2$, we use the Taylor expansion 
$$
\tanh(x) = \tanh(a)+(1-\tanh^2(a))(x-a)-\tanh(\xi)(1-\tanh^2(\xi)) (x-a)^2,
$$
for some $\xi$ between $x$ and $a$. From the computations in~\eqref{eq-TaylorR2} it follows that
\begin{align*}
|R_2| & \leq \left|\frac{\sqrt{n}}{\sqrt{\chi_n}} \frac{1}{n} \sum_{i\in[n]} \tanh(\xi_i)(1-\tanh^2(\xi_i))\left(\frac{\beta w_i}{\expec[W_n]}\tilde{m}_n-\sqrt{\frac{\beta}{\expec[W_n]}} w_i x_n^*\right)^2 \right|\nn\\
&= \left|\frac{\sqrt{n}}{\sqrt{\chi_n}} \frac{1}{n} \sum_{i\in[n]} \tanh(\xi_i)(1-\tanh^2(\xi_i))\left(\frac{\beta w_i}{\expec[W_n]}\right)^2\frac{\tilde{\chi}_n}{n}\tilde{X}_n^2 \right| \nn\\
&\leq \frac{\tilde{\chi}_n}{\sqrt{\chi_n}} \left(\frac{\beta}{\expec[W_n]}\right)^2 \expec[W_n^2] \tilde{X}_n^2 \frac{1}{\sqrt{n}},
\end{align*}
where we used that $|\tanh(x)|\leq1$.

To bound $\tilde{R}_2$, we expand $G_n'\left(\sqrt{\frac{\beta}{\expec[W_n]}} \tilde{m}_n\right)$ around $x_n^*$ and use that $G_n'(x_n^*)=0$ by definition of $x_n^*$,
and use \eqref{eq-deftildeXn} and \eqref{eq-deflambdasigma} to obtain that, for some $\xi$ between $\sqrt{\frac{\beta}{\expec[W_n]}} \tilde{m}_n$ and $x_n^*$,
\begin{align*}
\tilde{R}_2&=\frac{\sqrt{n}}{\sqrt{\tilde{\chi}_n}} \sqrt{\frac{\expec[W_n]}{\beta}} G'_n\left(\sqrt{\frac{\beta}{\expec[W_n]}} \tilde{m}_n\right) - \frac{1}{\sigma^2(x_n^*,\beta,h)}\tilde{X}_n \nn\\
&=\frac{\sqrt{n}}{\sqrt{\tilde{\chi}_n}} \sqrt{\frac{\expec[W_n]}{\beta}} G_n''(x_n^*) \left(\sqrt{\frac{\beta}{\expec[W_n]}} \tilde{m}_n-x_n^*\right)-G_n''(x_n^*)\tilde{X}_n\nn\\
&\qquad +\frac{\sqrt{n}}{\sqrt{\tilde{\chi}_n}} \sqrt{\frac{\expec[W_n]}{\beta}} G_n'''(\xi) \left(\sqrt{\frac{\beta}{\expec[W_n]}} \tilde{m}_n-x_n^*\right)^2 \nn\\
&= \frac{1}{\sqrt{n}}\sqrt{\tilde{\chi}_n}\sqrt{\frac{\beta}{\expec[W_n]}} \tilde{X}_n^2 G_n'''(\xi).
\end{align*}
Differentiating~\eqref{eq-Gn2} gives
$$
G'''_n(\xi) = 2\left(\frac{\beta}{\expec[W_n]}\right)^{3/2}\expec\left[\tanh\left(\sqrt{\frac{\beta}{\expec[W_n]}}W_n \xi + h\right)\left(1-\tanh^2\left(\sqrt{\frac{\beta}{\expec[W_n]}}W_n \xi + h\right)\right) W_n^3 \right].
$$
Since $|\tanh x|\leq1$, we obtain
$$
|G'''_n(\xi)| \leq 2\left(\frac{\beta}{\expec[W_n]}\right)^{3/2} \expec[W_n^3],
$$
from which the bound on $\tilde{R}_2$ follows.
\end{proof}

\noindent
We now combine all results to prove our main result.

\begin{proof}[Proof of Theorem~\ref{thm-berryesseen-magnetization}]
We apply Theorem~\ref{thm-MarginalStein}. To show that the first term of~\eqref{eq-thmStein} is $\mathcal{O}(1/\sqrt{n})$  it suffices to show, by Lemma~\ref{lem-bound1sttermerrors}, that 
$$
\expec[|R_3|],\expec[|R_4|],\expec[|R_5|], \expec[|\hat{R}_3|],\expec[|\hat{R}_4|],\expec[|\hat{R}_5|] \leq \frac{C}{\sqrt{n}},
$$
where $C$ is a constant not depending on $n$ that may change from line to line.

For the second term of~\eqref{eq-thmStein}, it suffices to show, by Lemma~\ref{lem-bound2ndtermerrors}, that 
$$
\expec[|\bar{R}_3|],\expec[|\bar{R}_4|],\expec[|\bar{R}_5|], \expec[|\check{R}_3|],\expec[|\check{R}_4|],\expec[|\check{R}_5|] \leq \frac{C}{\sqrt{n}}.
$$

Note that it follows from Proposition~\ref{prop-cltweighted} that $\expec[|\tilde{X}_n|]$ is uniformly bounded. By Condition~\ref{cond-WeightReg}(i)--(iii), also the first three moments of $W_n$ are uniformly bounded. From this and Lemma~\ref{lem-errorterms}, the bounds on $\expec[|R_3|], \expec[|\bar{R}_3|],\expec[|\hat{R}_3|], \expec[|\check{R}_3|], \expec[|R_4|], \expec[|\bar{R}_4|],\expec[|\hat{R}_4|]$ and $\expec[|\check{R}_4|]$ follow. Remark that this is one of the places where we see that
the constant in our Berry-Esseen bound depends on $(w_i)_{i \geq 1}$.

By rewriting
$$
\expec[|R_5|]=\frac{1}{\chi_n}\expec\left[\left|\frac{1}{\sqrt{n}} \sum_{i\in[n]}\tanh\left(\sqrt{\frac{\beta}{\expec[W_n]}} w_i x_n^*+h\right) \left(\sigma_i-\expec[\sigma_i] \right)\right|\right]\frac{1}{\sqrt{n}},
$$
it can be seen that $\expec[|R_5|]$ is of the form considered in Lemma~\ref{lem-clt-tweighted} with $t_i=\tanh\left(\sqrt{\frac{\beta}{\expec[W_n]}} w_i x_n^*+h\right)$. Hence, it follows from Lemma~\ref{lem-clt-tweighted} that $\sqrt{n}\expec[|R_5|]$ is uniformly bounded. A similar argument holds for $\bar{R}_5, \hat{R}_5$ and $\check{R}_5$.

For the third term in~\eqref{eq-thmStein}, note that by~\eqref{eq-regressionICW} that $R = \begin{pmatrix} R_1 +R_2 \\ \tilde{R_1}+ \tilde{R_2}\end{pmatrix}$ and we have that
$$
\expec\left[|\ell R|\right]  = \expec\left[|R_1+R_2+c\sigma^2(x_n^*,\beta,h) (\tilde{R}_1+\tilde{R}_2)|\right],
$$
and it follows from Lemma~\ref{lem-errorterms} and the uniform boundedness of the first three  moments of $W_n$ and all moments of $\tilde{X}_n$ that also this term can be bounded from above by $C/\sqrt{n}$.

If we only assume Condition~\ref{cond-WeightReg}(i)--(ii) and suppose that $\max_{i\in[n]}w_i \geq c\sqrt{n}$ for some $c>0$, then
$$
\expec[W_n^2] = \frac1n \sum_{i\in[n]} w_i^2 \geq c^2+\frac1n \sum_{i: i\neq \argmax w_j} w_i^2 \stackrel{n\to\infty}{\longrightarrow} c^2 + \expec[W^2],
$$
which is in contradiction to Condition~\ref{cond-WeightReg}(ii). Hence, $\max_{i\in[n]}w_i = o(\sqrt{n})$ and also
$$
\expec[W_n^3] = \frac1n \sum_{i\in[n]} w_i^3 \leq \max_{i\in[n]}w_i\ \expec[W_n^2] = o(\sqrt{n}).
$$
This suffices to prove the second statement of Theorem~\ref{thm-berryesseen-magnetization}.
\end{proof}

When one wants to prove the Berry-Esseen bound for the weighted sum of spins, the role of $X_n$ and $\tilde{X}_n$ can be interchanged. In fact, $X_n$ can be ignored in that case and in the first factor of~\eqref{eq-thmStein}, we have to estimate
\begin{align}\label{eq-approxweighted1stterm}
\frac{1}{2\lambda} \expec[\ell D D_1 \given \calF_n] &= \frac{\sigma^2(x_n^*,\beta,h)}{\tilde{\chi}_n} \sum_{i\in[n]}w_i^2\left(1-\sigma_i\expec[\sigma_i'\given \calF_n]\right) \nn\\
&\approx  \frac{\sigma^2(x_n^*,\beta,h)}{\tilde{\chi}_n} \expec\left[\left(1-\tanh^2\left(\sqrt{\frac{\beta}{\expec[W_n]}}W_n x_n^* + h\right)\right)W_n^2\right].
\end{align}
Since we want this to be equal to $1$, we choose $\tilde{\chi}_n$ as in \eqref{eq-choicetildechi}.
The approximation in~\eqref{eq-approxweighted1stterm} can be made precise as in Lemma~\ref{lem-bound1sttermerrors} and the resulting error terms can be shown to be $\calO(1/\sqrt{n})$ as in Lemma~\ref{lem-errorterms} under the assumption of one extra moment in Condition~\ref{cond-WeightReg}. Also the other terms in~\eqref{eq-thmStein} can then be shown to be $\calO(1/\sqrt{n})$ as in Lemmas~\ref{lem-bound2ndtermerrors} and~\ref{lem-errorterms}.

\paragraph*{Acknowledgements.}
A large part of this work was carried out at the Ruhr-Universit\"at Bochum, supported by the Deutsche Forschungsgemeinschaft (DFG) via RTG 2131 \emph{High-dimensional Phenomena in Probability -- Fluctuations and Discontinuity}.

\vspace{-.5cm}

\end{document}